\author{Fei Wang\thanks{School of Mathematical Sciences, CMA-Shanghai, Shanghai Jiao Tong University, Shanghai, China \href{mailto:fwang256@sjtu.edu.cn}{\texttt{fwang256@sjtu.edu.cn}}} \and Lingda Xu\thanks{Department of Applied Mathematics, The Hong Kong Polytechnic University, Hong Kong, China \href{mailto:lingda.xu@polyu.edu.hk}{\texttt{lingda.xu@polyu.edu.hk}}} \and Zeren Zhang\thanks{School of Mathematical Sciences, Shanghai Jiao Tong University, Shanghai, China \href{mailto:zhangzr0018@sjtu.edu.cn}{\texttt{zhangzr0018@sjtu.edu.cn}}}}
\title{The stability threshold for 3D MHD equations around Couette with rationally aligned magnetic field}
\date{}
\newcommand{\sm}{non-homogeneous modes }
\newcommand{\nm}{homogeneous modes }
\newcommand{\es}{\neq NH}
\newcommand{\en}{\neq H}
\newcommand{\et}{t_1}
\newtheorem{theorem}{Theorem}[section]
\newtheorem{lemma}{Lemma}[section]
\newtheorem{proposition}{Proposition}[section]
\newtheorem{remark}{Remark}[section]
\numberwithin{equation}{section}
\begin{document}
	\maketitle
	\begin{abstract}
		We address a stability threshold problem of the Couette flow $(y,0,0)$ in a uniform magnetic fleld $\alpha(\sigma,0,1)$  with $\sigma\in\mathbb{Q}$ for the 3D MHD equations on $\mathbb{T}\times\mathbb{R}\times\mathbb{T}$. Previously, the authors in \cite{L20,RZZ25} obtained the threshold $\gamma=1$ for $\sigma\in\mathbb{R}\backslash\mathbb{Q}$ satisfying a generic Diophantine condition, where they also proved $\gamma = 4/3$ for a general $\sigma\in\mathbb{R}$. In the present paper, we obtain the threshold $\gamma=1$ in $H^N(N>13/2)$, hence improving the above results when $\sigma$ is a rational number. The nonlinear inviscid damping for velocity $u^2_{\neq}$ is also established. Moreover, our result shows that the nonzero modes of magnetic field has an amplification of order $\nu^{-1/3}$ even on low regularity, which is very different from the case considered in \cite{L20,RZZ25}.
	\end{abstract}
	\section{Introduction}
	In this paper, we consider the 3D incompressible MHD equations on $\mathbb{T}\times\mathbb{R}\times\mathbb{T}$:
	\begin{equation}
		\left\{\begin{array}{l}
			\partial_t \tilde{u}+\tilde{u} \cdot \nabla \tilde{u}-\tilde{b} \cdot \nabla \tilde{b}=-\nabla \tilde{p}+\nu \Delta \tilde{u}, \\
			\partial_t \tilde{b}+\tilde{u} \cdot \nabla \tilde{b}-\tilde{b} \cdot \nabla \tilde{u}=\mu \Delta \tilde{b}, \\
			\nabla \cdot \tilde{u}=\nabla \cdot \tilde{b}=0 ,
		\end{array}\right.
	\end{equation}
	where $\tilde{u}$ is the velocity, $\tilde{b}$ is the magnetic field, $\tilde{p}$ is the pressure, $\nu$ is the fluid viscosity, and $\mu$ is the magnetic resistivity.
	We introduce the perturbations of the Couette flow and the homogeneous magnetic flied:
	\begin{equation*}
		u=\tilde{u}-(y,0,0)^{T},b=\tilde{b}-\alpha(\sigma,0,1)^{T},
	\end{equation*}
	with $\alpha\in\mathbb{R}$ and $\sigma\in\mathbb{Q}$. Without loss of generality, we always assume $\alpha>0$ in the present paper. Then $(u,b)$ satisfies the system
	\begin{equation}\label{pmhd}
		\left\{\begin{array}{l}
			\partial_tu+y\partial_x u -\alpha\partial_{\sigma}b+u\cdot\nabla u-b\cdot\nabla b+\left(\begin{array}{l}
				u^2\\0\\0
			\end{array}\right)=2\nabla\Delta^{-1}\partial_x u^2-\nabla p^{NL}+\nu\Delta u,\\
			\partial_tb+y\partial_x b -\alpha\partial_{\sigma}u+u\cdot\nabla b-b\cdot\nabla u-\left(\begin{array}{l}
				b^2\\0\\0
			\end{array}\right)=\mu\Delta b,\\
			p^{NL}=(-\Delta)^{-1}(\partial_ju^i\partial_iu^j-\partial_jb^i\partial_ib^j),\\
			\nabla\cdot u=\nabla\cdot b=0,\\
			u(0)=u_{in},\quad b(0)=b_{in},
		\end{array}\right.
	\end{equation}
	where summation over repeated indices is used and we defined $\partial_{\sigma}=\sigma\partial_x+\partial_z$.
	To understand the underlying transition mechanisms of the above system, we are concerned with the stability threshold problem in the Sobolev spaces and formulate it as \cite{BGM17}:

	\textbf{Stability threshold}: Given $N\geq0$, determine $\gamma_1,\gamma_2\in\mathbb{R}$ such that
	\begin{align*}
		&\|(u_{in},b_{in})\|_{H^N}\ll\nu^{\gamma_1}\mu^{\gamma_2}\ \Rightarrow\ \mathrm{stability},\\
		&\|(u_{in},b_{in})\|_{H^N}\gg\nu^{\gamma_1}\mu^{\gamma_2}\ \Rightarrow\ \mathrm{possible \ instability}.
	\end{align*}
	
	Quantifying stability thresholds has emerged as an active research frontier originating from fluid dynamics studies of the Navier-Stokes (NS) equations. In the classical NS system without the magnetic fields, the stability threshold takes the form $\nu^\gamma$ for some $\gamma\geq0$. 
	A seminal contribution by Bedrossian, Germain, and Masmoudi \cite{BGM17} established the threshold $\gamma=\frac{3}{2}$ for 3D perturbations of Couette flow on the domain 	$\mathbb{T}\times\mathbb{R}\times\mathbb{T}$ in Sobolev spaces. Specifically, they demonstrated
	that initial perturbations $u_{in}$
	satisfying $\|u_{in}\|_{H^N}\ll\nu^{3/2}$ for
	$N>\frac{9}{2}$	yield global solutions remaining order of  $O(\nu^{1/2})$.
	Subsequent refinement by Wei and Zhang \cite{WZ21} improved this threshold to $\gamma=1$ in $H^2$. Due to the absence of lift-up effects, the 2D NS system exhibits fundamentally different behavior, resulting in significantly lower stability thresholds. For perturbations on $\mathbb{T}\times\mathbb{R}$, Bedrossian, Vicol, and Wang \cite{BVW18} established a threshold $\gamma=\frac{1}{2}$, with later analyses in \cite{MZ19,WZ23} sharpening this to $\gamma=\frac{1}{3}$. When considering perturbations in Gevrey classes, distinct thresholds emerge: $\gamma=1$ \cite{BGM20,BGM22} for $\mathbb{T}\times\mathbb{R}\times\mathbb{T}$ versus $\gamma=0$ \cite{BMV16,LMZ22} for $\mathbb{T}\times\mathbb{R}$.
	Stability phenomena become more complex in the bounded domains. Recent studies in \cite{CWZ20,CLWZ20,BHIW23,WZ23b,BHIW24a,BHIW24b} have elucidated how boundary layers generate  instability mechanisms. For broader perspectives on general shear flows, we refer to \cite{LZ23,LWZ20,WZZ20,CMEW20,DL20,D23,C23,ADM21,OK80,AB24}.
	
	Due to the presence of magnetic fields, the problem becomes more complicated for magnetohydrodynamic (MHD) systems. 
	While magnetic fields generally suppress the mixing mechanisms inherent in Couette flow \cite{K24,KZ23}, they paradoxically exhibit stabilizing effects at high intensities \cite{GBM09,L20,ZZZ21}. Significant progresses have been made in quantifying these phenomena: Liss \cite{L20} proved the stability threshold $\gamma=1$ for $\sigma\in\mathbb{R}\backslash\mathbb{Q}$ satisfying a generic Diophantine condition in Sobolev spaces when $\nu=\mu>0$ on $\mathbb{T}\times\mathbb{R}\times\mathbb{T}$. Using the same method, Liss claimed the threshold $\gamma=\frac{4}{3}$ for any $\sigma\in\mathbb{R}$ under the same assumption. Recently, Rao, Zhang, and Zi \cite{RZZ25} extended these results to the case $\mu\neq\nu$. For 2D case in $\mathbb{T}\times\mathbb{R}$, Chen and Zi \cite{ZZ23b} showed $\gamma=\frac{5}{6}+$ for shear flow close to Couette flow in Sobolev spaces when $\nu=\mu$. Dolce \cite{D24} proved $(\gamma_1,\gamma_2)=(\frac{2}{3},0)$ in a more general regime $0<\mu^3\lesssim\nu\leq\mu$ where the fluid effects dominate. In case $\mu\leq\nu$, Knobel \cite{K24} established the stability threshold $(\gamma_1,\gamma_2)=(\frac{1}{12},\frac{1}{2})$ if $\nu^3\lesssim\mu\leq\nu$ and instability with inflation of size $\nu\mu^{-\frac{1}{3}}$ if $\mu\lesssim\nu^3$. The first and third authors of the present paper \cite{WZ24} and Jin, Ren and Wei \cite{JRW24} generalize these results to arbitrarily $\nu,\mu\in(0,1]$ in the vorticity system and the velocity system, respectively. The nonlinear threshold $(\gamma_1,\gamma_2)=(0,1)$ in the Gevrey-2- space when $\nu=0,\mu>0$ was obtained by Zhao and Zi \cite{ZZ24}. Recently, Dolce, Knobel, and Zillinger \cite{DKZ24} proved the algebraic instability and large norm inflation of the magnetic current in Gevrey classes when $\nu>0,\mu=0$.
	
	The purpose of this paper is to improve the stability threshold to $\gamma=1$ for $\sigma\in\mathbb{Q}$ in $\mathbb{T}\times\mathbb{R}\times\mathbb{T}$ when $\nu=\mu$, which answers a question raised in \cite{L20} affirmatively. Moreover, our result shows that the nonzero modes of magnetic field have an amplification of order $\nu^{-1/3}$ even on low regularity, which is very different from the case of $\sigma\in\mathbb{R}\backslash\mathbb{Q}$ satisfying a generic Diophantine condition.
	Defining
	\begin{equation*}
		f_0=\int_{\mathbb{T}}f(x,y,z)dx,\quad\quad f_{\neq}=f-f_0,
	\end{equation*}
	Our stability result is stated as follows:
	\begin{theorem}\label{mainthe}
		Assume $\nu=\mu\in(0,1]$, $\sigma=\frac{q}{p}\in\mathbb{Q}$, $|\alpha|>8p>0$, and $N>9/2$. Let $(u_{in},b_{in})$ be the initial datum of \eqref{pmhd}. There exist constant $0<\delta_0<1$ and $\epsilon_0=\epsilon_0(N,\sigma)>0$ such that if it holds that
		\begin{equation*}
			\|(u_{in},b_{in})\|_{H^{N+2}}=\epsilon\leq\epsilon_0\nu,
		\end{equation*}
		then the profiles $U(t,X,Y,Z)=u(t,X+Yt,Y,Z)$ and $B(t,X,Y,Z)=b(t,X+Yt,Y,Z)$ satisfy the following stability estimates:
		\begin{subequations}\label{bdmain}
			\begin{align}
				&\|e^{\delta_0 \nu^{1/3}t}(\partial_X,\partial_Z)\partial_XU^1_{\neq }\|_{L^\infty H^{N-2}}+\nu^{1/6}\|(\partial_X,\partial_Z)\partial_XU^1_{\neq }\|_{L^2H^{N-2}}\lesssim\epsilon,\label{bdu1}\\
				&{\|e^{\delta_0\nu^{1/3}t}(\partial_X,\partial_Z)\nabla_L U^2_{\neq }\|_{L^\infty H^{N-2}}+\|U^2_{\neq }\|_{L^2 H^{N-2}}+\nu^{1/6}\|(\partial_X,\partial_Z)\nabla_L  U^2_{\neq }\|_{L^2 H^{N-2}}\lesssim\epsilon},\label{bdu2}\\
				&\|e^{\delta_0 \nu^{1/3}t}(\partial_X,\partial_Z)^2U^3_{\neq }\|_{L^\infty H^{N-2}}+\nu^{1/6}\|(\partial_X,\partial_Z)^2U^3_{\neq }\|_{L^2H^{N-2}}\lesssim\epsilon,\label{bdu3}\\
				&\|e^{\delta_0\nu^{1/3}t}\partial_XB^1_{\neq }\|_{L^\infty H^{N}}+\nu^{1/6}\|(\partial_X,\partial_Z)\partial_{X}B^1_{\neq }\|_{L^2 H^{N}}\lesssim\nu^{-1/3}\epsilon,\label{bdb1}\\
				&\|e^{\delta_0\nu^{1/3}t}(\partial_X,\partial_Z)(B^2_{\neq },B^3_{\neq })\|_{L^\infty H^{N}}+\nu^{1/6}\|(\partial_X,\partial_Z)(B^2_{\neq },B^3_{\neq})\|_{L^2 H^{N}}\lesssim\epsilon,\label{bdb2}\\
				&\|(1,\partial_Z)^2(U_0,B_0)\|_{L^{\infty}H^N}\lesssim\epsilon,\label{bdu0}
			\end{align}
		\end{subequations}		
		where $\nabla_L=(\partial_X,\partial_Y-t\partial_X,\partial_Z)$.
	\end{theorem}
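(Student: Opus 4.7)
The plan is to proceed via a bootstrap in the profile coordinates $U(t,X,Y,Z) = u(t,X+Yt,Y,Z)$, $B(t,X,Y,Z) = b(t,X+Yt,Y,Z)$, where the background Couette transport is removed and $\partial_\sigma$ acts on the Fourier side (dual to $(X,Y,Z)$) as multiplication by $i(\sigma k + n)$. Because $\sigma = q/p \in \mathbb{Q}$, this symbol has a large kernel, the homogeneous modes (H) $\{(k,n):qk+pn=0\}$, which is the structural novelty compared to the Diophantine case of \cite{L20,RZZ25}. I would first split the nonzero modes of $(U,B)$ into a non-homogeneous component (NH), on which an Alfvén-wave damping of size $\alpha|\sigma k+n|\gtrsim \alpha/p$ is available, and an H component on which it is absent. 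In the NH sector I would symmetrize via Elsässer-type variables $Z^i_\pm = U^i \pm B^i$ so that the Alfvén spectrum $\pm i\alpha(\sigma k+n)$ is diagonalized, exactly as in \cite{L20,RZZ25}. In the H sector the magnetic field decouples from this stabilizing structure: the $B^1$-equation inherits the forcing $-B^2$ from the lift-up term $(b^2,0,0)^T$ in \eqref{pmhd}, reproducing the classical Couette lift-up and accounting for the $\nu^{-1/3}$ inflation of $B^1_\neq$ appearing in \eqref{bdb1}, while $(B^2_H, B^3_H)$ inherit the inviscid damping and enhanced dissipation of the velocity through the divergence-free constraint.

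The weighted energy would be built from three Fourier multipliers of Bedrossian--Masmoudi--Mouhot / Wei--Zhang ghost type: (i) a universal multiplier encoding the enhanced-dissipation factor $e^{\delta_0\nu^{1/3}t}$ and the inviscid damping needed to extract $\|U^2_\neq\|_{L^2 H^{N-2}}$ from $\|\nabla_L U^2_\neq\|_{L^\infty H^{N-2}}$; (ii) an NH multiplier which, together with the Elsässer symmetrization, delivers the $L^2_t$ gain $\nu^{1/6}$ for both fields in \eqref{bdu1}--\eqref{bdb2}; (iii) an H multiplier tuned to tolerate the $\nu^{-1/3}$ amplification carried by $B^1_H$ without propagating it to the velocity or to $(B^2_H,B^3_H)$. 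For the $x$-averages $(U_0,B_0)$, which lose both mixing and $\partial_\sigma$ damping, I would use the standard 2.5D Navier--Stokes-type energy augmented by one $\partial_Z$ derivative, closed via the stronger $H^{N+2}$ regularity of the data combined with the $\partial_X$ gain of the nonzero modes to handle the $(\text{nonzero})\otimes(\text{nonzero})\to 0$ feedback.

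The bootstrap then reduces to improving each of \eqref{bdu1}--\eqref{bdu0} by a factor $\tfrac12$ under the doubled a priori assumptions, and the main obstacle is the interaction H$\otimes$H$\to$H involving $B^1_H$: any transport or stretching term with $B^1_H$ as an input costs a factor $\nu^{-1/3}$ from \eqref{bdb1} and must be absorbed against the $\nu^{1/6}$ enhanced-dissipation gain together with the smallness $\epsilon\lesssim\nu$. Closing this feedback loop requires using the divergence-free constraint $\partial_X B^1 = -\partial_Y B^2 - \partial_Z B^3$ to convert $B^1_H$ into tangential derivatives of $(B^2_H,B^3_H)$ that do carry enhanced dissipation, together with the quantitative gap $|\alpha|>8p$ which forces $\alpha/p \gtrsim 1$ and thereby preserves a universal Alfvén coercivity on the NH sector uniform in $p$. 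Once these paraproduct estimates are established, the bootstrap closes by standard continuity.
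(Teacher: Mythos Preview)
Your overall architecture---bootstrap in profile coordinates, the NH/H splitting according to whether $\sigma k+l$ vanishes, Els\"asser variables $W^\pm=T^{-t}_{\pm\alpha}(U\pm B)$ on the NH sector with integration-by-parts in time exploiting $|\alpha|>8p$---matches the paper. But the treatment of the H sector contains a structural misidentification that would prevent closure. You assert that $(B^2_H,B^3_H)$ ``inherit the inviscid damping and enhanced dissipation of the velocity through the divergence-free constraint.'' This is false for $B^2_H$: on H modes the linearization of $Q^2_{\en}=\Delta_L U^2_{\en}$ and $G^2_{\en}=\Delta_L B^2_{\en}$ is \emph{asymmetric},
\[
\partial_t Q^2_{\en}-\nu\Delta_L Q^2_{\en}=0,\qquad
\partial_t G^2_{\en}+2\partial_{XY}^L\Delta_L^{-1}G^2_{\en}-\nu\Delta_L G^2_{\en}=0,
\]
because the linear pressure $2\nabla_L\Delta_L^{-1}\partial_X U^2$ appears only in the velocity equation and cancels the stretching there. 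Hence $U^2_{\en}$ has inviscid damping but $B^2_{\en}$ does not; $G^2_{\en}$ grows like $\nu^{-2/3}$ and $\partial_Y^L B^2_{\en}$ like $\nu^{-1/3}$. The $\nu^{-1/3}$ inflation of $B^1_{\en}$ in \eqref{bdb1} is then \emph{produced} by the divergence-free relation $\partial_X B^1_{\en}=-\partial_Y^L B^2_{\en}+\sigma\partial_X B^3_{\en}$, not cured by it---so your proposed fix of ``converting $B^1_H$ into tangential derivatives of $(B^2_H,B^3_H)$ that do carry enhanced dissipation'' goes in the wrong direction.

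Two further mechanisms from the paper are absent from your plan and are not recoverable from what you wrote. First, the nonlinear term $\Delta_L(B\cdot\nabla_L B^2)_{\en}$ forces $Q^2_{\en}$ to grow like $\nu^{-1/3}$ in $H^N$, so there is no uniform $H^N$ bound on $U^2_{\en}$; the paper compensates by proving a \emph{regularity-for-growth trade}: $\langle t\rangle^{-1}Q^2_{\en}$ is uniformly bounded in $H^{N-1}$, which is what puts \eqref{bdu1}--\eqref{bdu3} in $H^{N-2}$ rather than $H^N$. Your proposal does not mention any such exchange. Second, you locate the hardest nonlinear interaction at H$\otimes$H$\to$H involving $B^1_H$, but the paper identifies the critical term as the zero-mode feedback $\langle (U^{Hi,2}_{\en}\partial_Y^L G^{Lo,1}_{\en})_0,\,G^1_0\rangle_{H^N}$, which fails precisely because $U^2_{\en}$ lacks a uniform $H^N$ bound; this is handled not by an extra $\partial_Z$ derivative on the zero mode but by the ghost multiplier $M_3$ of \cite{WZ23}, whose dissipative budget $\|\sqrt{-\Upsilon}\,\cdot\|_{L^2H^N}$ is exactly tuned to absorb this interaction. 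Without these two ingredients the bootstrap does not close at threshold $\epsilon\lesssim\nu$.
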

	\begin{remark}
		The inviscid damping of $U^2_{\neq}$ is captured by the uniform $\nu$-bounds in \eqref{bdu2}. Since $B^2_{\neq}$ does not exhibit the inviscid damping, there is an amplification of order $\nu^{-1/3}$ in the right side of \eqref{bdb1}. Note that $(\partial_X,\partial_Z)\partial_XB^1_{\neq}$ exhibits a growth of order $\nu^{-1/2}$ due to the nonlinear effect. The enhanced dissipation of the nonzero modes is characterized by the $e^{\delta_0\nu^{1/3}t}$ and the $\nu^{-1/6}$ growth of the $L^2$ in time estimates in \eqref{bdu1}--\eqref{bdb2}. Furthermore, the estimate \eqref{bdu0} describes the suppression of the lift-up effect for zero-mode.
	\end{remark}
	\begin{remark}
		Recalling the equations \eqref{pmhd}, the $\alpha\partial_{\sigma}(b,u)$ term exhibits the effect of background magnetic field. Using $k,l$ to represent the Fourier variable of $x,z$ respectively, for $\sigma=q/p$, there are only two cases: $|\sigma k+l|\geq1/p$ and $|\sigma k+l|=0$. Therefore, we decompose \begin{equation*}
			F_{\neq}=F_{\es}+F_{\en},
		\end{equation*} where $F_{\es}=\mathbb{P}_{\sigma k+l\neq0}F_{\neq}$ is referred as the non-homogeneous modes and $F_{\en}=\mathbb{P}_{\sigma k+l=0}F_{\neq}$ denotes the homogeneous modes (see \eqref{defcom1} and \eqref{defcom2} for details). For the \sm $(U_{\es},B_{\es})$, we have the stability estimates without regularity losses:
		\begin{align*}
			\|e^{\delta_0 \nu^{1/3}t}(\partial_X,\partial_Z)\nabla_L(U^2_{\es},B^2_{\es})\|_{L^\infty H^N}+\|&(\partial_X,\partial_Z)(U^2_{\es},B^2_{\es})\|_{L^2H^N}\\
			&+\nu^{\frac{1}{6}}\|(\partial_X,\partial_Z)\nabla_L(U^2_{\es},B^2_{\es})\|_{L^2H^N}\lesssim\epsilon,\\
			\|e^{\delta_0 \nu^{1/3}t}(\partial_X,\partial_Z)^2(U^3_{\es},B^3_{\es})\|_{L^\infty H^N}+\nu^{\frac{1}{6}}\|&(\partial_X,\partial_Z)^2(U^3_{\es},B^3_{\es})\|_{L^2H^N}\lesssim\epsilon.
		\end{align*}
		For the \nm $(U_{\en},B_{\en})$, it holds that
		\begin{align*}
			&{\|e^{\delta_0\nu^{1/3}t}\partial_X\nabla_L U^2_{\en}\|_{L^\infty H^{N-2}}+\|U^2_{\en}\|_{L^2 H^{N-2}}+\nu^{1/6}\|\partial_X\nabla_L U^2_{\en}\|_{L^2 H^{N-2}}\lesssim\epsilon},\\
			&{\|e^{\delta_0\nu^{1/3}t}\partial_{XX} U^3_{\en}\|_{L^\infty H^{N-2}} +\nu^{1/6}\|\partial_{XX}U^3_{\en}\|_{L^2 H^{N-2}}\lesssim\epsilon,}\\
			&\|e^{\delta_0\nu^{1/3}t}\partial_Y^LB^2_{\en}\|_{L^\infty H^{N}}+\nu^{1/6}\|\partial_Y^LB^2_{\en}\|_{L^2 H^{N}}\lesssim\nu^{-1/3}\epsilon,\\
			&\|e^{\delta_0\nu^{1/3}t}\partial_X(B^2_{\en},B^3_{\en})\|_{L^\infty H^{N}}+\nu^{1/6}\|\partial_X(B^2_{\en},B^3_{\en})\|_{L^2 H^{N}}\lesssim\epsilon,
		\end{align*}
		which is the reason why we have $H^{N-2}$ in \eqref{bdu1}--\eqref{bdu3} and the  $\nu^{-1/3}$ growth in \eqref{bdb1}. Using the incompressible condition, we can obtain the estimates of $(U^1_{\neq},B^1_{\neq})$.
	\end{remark}
	
	\begin{remark}
		Combining the methods of \cite{RZZ25} and the present paper, one can prove that, for $\nu\neq\mu$, if $|\alpha|\geq8p(\mu+\nu)/\sqrt{\mu\nu}$, the estimate similar to \eqref{bdmain} holds for the initial datum satisfying
		\begin{equation*}
			\|(u_{in},b_{in})\|_{H^{N+2}}=\epsilon\leq\epsilon_0\min\{\nu,\mu\}.
		\end{equation*}
	\end{remark}
	\begin{remark}
		It is still open whether the threshold can be improved for $\sigma\in \mathbb{R}\backslash\mathbb{Q}$ which does not satisfy the generic Diophantine condition.
	\end{remark}
	\noindent\textbf{Main ideas of the proof}: In this paper, we deal with \sm and \nm (see \eqref{defcom1} and \eqref{defcom2} for definitions) separately. We first examine the linear terms of the \sm $F^{\pm,i}_{\es}$ for $i\in\{2,3\}$ (see \eqref{defwf} for definition). The growth of $F^{\pm,i}_{\es}$ is caused by the linear stretching terms $\partial^L_{XY}W^{\pm,i}_{\es}$ . Motivated by \cite{WZ21}, we employ the dissipation of $\partial_XW^{\pm,3}_{\es}$ to get the $\nu^{-2/3}$ growth of $F^{\pm,3}_{\es}$. While for $F^{\pm,2}_{\es}$, we use the enhanced dissipation of the quantity $\partial_X|\nabla_L|W^{\pm,2}_{\es}$ to obtain the amplification of order $\nu^{-1/3}$.
	
	For the \nm $(U_{\en},B_{\en})$, the behaviors of $U^2_{\en}$ and $B^2_{\en}$ are essentially different. In fact, letting $Q=\Delta_LU$ and $G=\Delta_LB$, the linearization system of $(Q^2_{\en},G^2_{\en})$ reads 
	\begin{equation*}
		\left\{
		\begin{aligned}
			&\partial_tQ^2_{\en}-\nu\Delta_L Q^2_{\en}=0,\\
			&\partial_tG^2_{\en}+2\partial_{XY}^L\Delta_L^{-1}G^2_{\en}-\nu\Delta_L G^2_{\en}=0.
		\end{aligned}\right.
	\end{equation*}
	A direct calculation gives that
	\begin{equation*}
		\|Q^2_{\en}(t)\|_{L^2}\lesssim e^{-c\nu t^3}\|Q^2_{in}\|_{L^2},\quad \|G^2_{\en}(t)\|_{L^2}\lesssim \nu^{-2/3}e^{-c\nu t^3}\|G^2_{in}\|_{L^2}.
	\end{equation*} When considering the nonlinear interactions, the $\Delta_L(B\cdot \nabla_LB^2)_{\en}$ term leads to an additional amplification of order $\nu^{-1/3}$ for $Q^2_{\en}$, which causes trouble in the nonlinear estimates. For instance, we do not have a uniform bound of $U^2_{\en}$ in $H^N$. To overcome these difficulties, we exploit the subtle relationship between growth and regularity of $U^i_{\en}$ and $\Delta_LU^i_{\en}$ with $i\in\{2,3\}$ (see \eqref{bsnu} for details).
	
	Turning to the zero-mode, the most challenging part is of the term
	\begin{equation*}
		\left\langle (U^{Hi,2}_{\en}\cdot\partial_Y^LG^{Lo,1}_{\en})_0,G^1_0\right\rangle_{H^N},
	\end{equation*}
	which is caused by the absence of uniform bound of $U^2_{\en}$. To reduce the derivative losses as much as possible, we use the multiplier $M_3$ introduced in \cite{WZ23}  (see section \ref{secf0} for details). 
	\section{Preliminaries}
	\subsection{Notations}
	Throughout this paper, for $a,b\in\mathbb{R}$, we define
	\begin{equation*}
		|a,b|:=\sqrt{a^2+b^2},\quad\langle a\rangle:=\sqrt{1+|a|^2}.
	\end{equation*}
	We use the notation $a\lesssim b$ to express $a\leq Cb$ for some constant $C>0$ independent of the parameters of interest such as $\nu,\alpha,\sigma$. The Fourier transform of a function $f$ is denoted by
	\begin{equation*}
		\mathcal{F}(f)(k,\eta,l)=\hat{f}(k,\eta,l)=\iint_{\mathbb{T}\times\mathbb{R}\times\mathbb{T}}e^{-i(kx+\eta y+lz)}f(x,y,z)\mathrm{d}x\mathrm{d}y\mathrm{d}z.
	\end{equation*}
	In this paper, for $\sigma=q/p$, there are only two cases: $|\sigma k+l|\geq1/p$ and $|\sigma k+l|=0$. Therefore, we decompose \begin{equation*}
		f_{\neq}=f_{\es}+f_{\en},
	\end{equation*} where the \sm $f_{\es}$ represents the projection onto the frequencies in $x,z$ satisfying $|\sigma k+l|>0$, denoted by
	\begin{equation}\label{defcom1}
		f_{\es}=\mathbb{P}_{\sigma k+l\neq0}f_{\neq}:=\mathcal{F}^{-1}(\mathbf{1}_{\sigma k+l\neq0,k\neq0}\hat{f}(k,\eta,l)),
	\end{equation}
	while the \nm $f_{\en}$ is given by
	\begin{equation}\label{defcom2}
		f_{\en}=\mathbb{P}_{\sigma k+l=0}f_{\neq}:=\mathcal{F}^{-1}(\mathbf{1}_{\sigma k+l=0,k\neq0}\hat{f}(k,\eta,l)).
	\end{equation}
	At times it is also convenient to project $f$ onto the nonzero frequencies in $z$. For this, we use the alternate notation
	\begin{equation*}
			\mathbb{P}_{l\neq 0}f:=\mathcal{F}^{-1}(\mathbf{1}_{l\neq0}\hat{f}(k,\eta,l)).
	\end{equation*}
	We define the oscillation multiplier given in \cite{L20}:
	\begin{equation*}
		T^{t}_{a}f:=\mathcal{F}^{-1}(e^{ia(\sigma k+l)t}\hat{f}(k,\eta,l)).
	\end{equation*}
	We also introduce the paraproduct decomposition
	\begin{align*}
		fg=&\ \mathcal{F}^{-1}\sum_{k',l'\in\mathbb{Z}}\int_{\eta'\in\mathbb{R}}\hat{f}(k',\eta',l')\hat{g}(k-k',\eta-\eta',l-l')\chi(k,\eta,l,k',\eta',l')\mathrm{d}\eta'\\
		&+\mathcal{F}^{-1}\sum_{k',l'\in\mathbb{Z}}\int_{\eta'\in\mathbb{R}}\hat{f}(k',\eta',l')\hat{g}(k-k',\eta-\eta',l-l')(1-\chi(k,\eta,l,k',\eta',l'))\mathrm{d}\eta'\\
		=&:f^{Hi}g^{Lo}+f^{Lo}g^{Hi},
	\end{align*}
	where 
	\begin{equation*}
		\chi(\xi\in\mathbb{R}^3,\xi'\in\mathbb{R}^3)=\left\{\begin{array}{l}
			1\quad\mathrm{if}\ |\xi-\xi'|\leq2|\xi'|,\\
			0\quad\mathrm{otherwise}.
		\end{array}\right.
	\end{equation*}
	For a function of space and time $f(t,x,y,z)$ defined on a time interval $(a,b)$, we denote the space $L^p(a,b;H^s)$ for $1\leq p\leq\infty$ by the norm
	\begin{equation*}
		\|f\|_{L^p(a,b;H^s)}:=\|\|f\|_{H^s}\|_{L^p(a,b)}.
	\end{equation*}
	When the time interval is clear from context or mentioned explicitly elsewhere, we use the shorthand notation $\|f\|_{L^p(a,b;H^s)}=\|f\|_{L^p H^s}$.
	\subsection{Reformulation of the equations}
	\textbf{Change of independent variables}: First, we change the coordinates to mod out by the fast mixing of the Couette flow:
	\begin{equation*}
		X=x-yt,\quad Y=y,\quad Z=z.
	\end{equation*}
	Denoting $U(t,X,Y,Z)=u(t,x,y,z)$ and $B(t,X,Y,Z)=b(t,x,y,z)$, the system of $(U,B)$ reads
	\begin{equation*}
		\left\{
		\begin{array}{l}
			\partial_tU-\alpha\partial_\sigma B+U\cdot\nabla_LU-B\cdot\nabla_LB+\left(\begin{array}{l}
				U^2\\0\\0
			\end{array}\right)=2\nabla_L\Delta_L^{-1}\partial_XU^2-\nabla_LP^{NL}+\nu\Delta_LU,\\
			\partial_tB-\alpha\partial_\sigma U+U\cdot\nabla_LB-B\cdot\nabla_LU-\left(\begin{array}{l}
				B^2\\0\\0
			\end{array}\right)=\nu\Delta_LB,\\
			P^{NL}=(-\Delta_L)^{-1}(\partial^L_jU^i\partial^L_iU^j-\partial^L_jB^i\partial^L_iB^j),
		\end{array}\right.
	\end{equation*}
	where summation over repeated indices is used, $\nabla_L=(\partial_X,\partial^L_Y,\partial_Z)=(\partial_X,\partial_Y-t\partial_X,\partial_Z),\Delta_L=\nabla_L\cdot\nabla_L$, and $\partial_\sigma=\sigma\partial_X+\partial_Z$. In general, for a function $\phi(t,x,y,z)$, we denote $\Phi(t,X,Y,Z)=\phi(t,x,y,z)$.\\	
	\textbf{Change of dependent variables}: For the \sm $(U_{\es},B_{\es})$ and zero-mode $(U_0,B_0)$, following \cite{L20}, we introduce good unknowns to symmetrize the system. Denoting
	\begin{equation}\label{defwf}
		W^{\pm}=T^{-t}_{\pm\alpha}(U\pm B),\quad F^{\pm}=\Delta_LW^{\pm},
	\end{equation}
	 where multiplier $T^{-t}_{\pm\alpha}$ given by $T^{-t}_{\pm\alpha}(t,k,\eta,l)=e^{\mp i\alpha (\sigma k+l)t}$, the system of $W^{\pm}$ and $F^{\pm}$ read
	\begin{equation*}
		\left\{
		\begin{aligned}
			\partial_tW^++(T^{-t}_{2\alpha }W^-)\cdot\nabla_L W^++\left(\begin{array}{l}
				T^{-t}_{2\alpha }W^{-,2}\\0\\0
			\end{array}\right)=&\nabla_L\Delta_L^{-1}\partial_X(W^{+,2}+T^{-t}_{2\alpha }W^{-,2})\\
			&+\nabla_L\Delta_L^{-1}(\partial^L_j(T^{-t}_{2\alpha }W^{-,i})\partial^L_iW^{+,j})+\nu\Delta_LW^+,\\
			\partial_tW^-+(T^{-t}_{-2\alpha }W^+)\cdot\nabla_L W^-+\left(\begin{array}{l}
				T^{-t}_{-2\alpha }W^{+,2}\\0\\0
			\end{array}\right)=&\nabla_L\Delta_L^{-1}\partial_X(W^{-,2}+T^{-t}_{-2\alpha }W^{+,2})\\
			&+\nabla_L\Delta_L^{-1}(\partial^L_j(T^{-t}_{-2\alpha }W^{+,i})\partial^L_iW^{-,j})+\nu\Delta_LW^-,
		\end{aligned}	\right.
	\end{equation*}
	and
	\begin{equation}\label{eqf}
		\left\{\begin{aligned}
			\partial_tF^{\pm,1}&+2\partial^L_{XY}\Delta_L^{-1}F^{\pm,1}-\partial_{XX}\Delta_L^{-1}(F^{\pm,2}+T^{-t}_{\pm2\alpha}F^{\mp,2})+T^{-t}_{\pm2\alpha}F^{\mp,2}-\nu\Delta_LF^{\pm,1}=-(T^{-t}_{\pm2\alpha}W^{\mp})\cdot\nabla_LF^{\pm,1}\\
			&-(T^{-t}_{\pm2\alpha}F^{\mp})\cdot\nabla_LW^{\pm,1}-2\partial^L_i(T^{-t}_{\pm2\alpha}W^{\mp,j})\partial^L_{ij}W^{\pm,1}+\partial_X(\partial^L_j(T^{-t}_{\pm2\alpha}W^{\mp,i})\partial^L_iW^{\pm,j}),\\
			\partial_tF^{\pm,2}&+\partial^L_{XY}\Delta_L^{-1}F^{\pm,2}-\partial^L_{XY}\Delta_L^{-1}T^{-t}_{\pm2\alpha}F^{\mp,2}-\nu\Delta_LF^{\pm,2}=-(T^{-t}_{\pm2\alpha}W^{\mp})\cdot\nabla_LF^{\pm,2}\\
			&-(T^{-t}_{\pm2\alpha}F^{\mp})\cdot\nabla_LW^{\pm,2}-2\partial^L_i(T^{-t}_{\pm2\alpha}W^{\mp,j})\partial^L_{ij}W^{\pm,2}+\partial_Y^L(\partial^L_j(T^{-t}_{\pm2\alpha}W^{\mp,i})\partial^L_iW^{\pm,j}),\\
			\partial_tF^{\pm,3}&+2\partial^L_{XY}\Delta_L^{-1}F^{\pm,3}-\partial^L_{XZ}\Delta_L^{-1}(F^{\pm,2}+T^{-t}_{\pm2\alpha}F^{\mp,2})-\nu\Delta_LF^{\pm,3}=-(T^{-t}_{\pm2\alpha}W^{\mp})\cdot\nabla_LF^{\pm,3}\\
			&-(T^{-t}_{\pm2\alpha}F^{\mp})\cdot\nabla_LW^{\pm,3}-2\partial^L_i(T^{-t}_{\pm2\alpha}W^{\mp,j})\partial^L_{ij}W^{\pm,3}+\partial_Z(\partial^L_j(T^{-t}_{\pm2\alpha}W^{\mp,i})\partial^L_iW^{\pm,j}),\\
		\end{aligned}
		\right.
	\end{equation}
	respectively.
	To treat the linear stretching term $\partial^L_{XY}\Delta_L^{-1}F^{\pm,2}_{\es}$ of the \sm $F^{\pm,2}_{\es}$, we introduce the quantity $\partial_X|\nabla_L|W^{\pm,2}_{\es}$, which satisfies
	\begin{equation}\label{eqsymv2}
		\begin{aligned}
			\partial_t(\partial_X|\nabla_L|W^{\pm,2}_{\es})&+\frac{\partial^L_{XXY}}{|\nabla_L|}T^{-t}_{\pm2\alpha}W^{\mp,2}_{\es}-\nu\Delta_L\partial_X|\nabla_L|W^{\pm,2}_{\es}=\\
			&-\partial_X|\nabla_L|((T^{-t}_{\pm2\alpha}W^{\mp})\cdot\nabla_LW^{\pm,2})_{\es}+\frac{\partial^L_{XY}}{|\nabla_L|}(\partial^L_j(T^{-t}_{\pm2\alpha }W^{\mp,i})\partial^L_iW^{\pm,j})_{\es},
		\end{aligned}
	\end{equation}
	where $|\nabla_L|=\sqrt{-\Delta_L}$.
	For the \nm $(U_{\en},B_{\en})$, considering that the linearization of $(U_{\en},B_{\en})$ is decoupled, we treat the system of $(U_{\en},B_{\en})$ directly. 
	Let $Q=\Delta_LU,G=\Delta_LB$. Then $(U_{\en},B_{\en})$ and
	$(Q_{\en},G_{\en})$ satisfy the equations
	\begin{equation}\label{equb}
		\left\{
		\begin{array}{l}
			\partial_tU_{\en}+(U\cdot\nabla_LU)_{\en}-(B\cdot\nabla_LB)_{\en}+\left(\begin{array}{l}
				U^2_{\en}\\0\\0
			\end{array}\right)=2\nabla_L\partial_{X}\Delta_L^{-1}U^2_{\en}-\nabla_LP^{NL}_{\en}+\nu\Delta_LU_{\en},\\
			\partial_tB_{\en}+(U\cdot\nabla_LB)_{\en}-(B\cdot\nabla_LU)_{\en}-\left(\begin{array}{l}
				B^2_{\en}\\0\\0
			\end{array}\right)=\nu\Delta_LB_{\en}.\\
			P^{NL}=(-\Delta_L)^{-1}(\partial^L_jU^i\partial^L_iU^j-\partial^L_jB^i\partial^L_iB^j),
		\end{array}\right.
	\end{equation}
    and 
	\begin{equation}\label{eqqh}
		\left\{
		\begin{aligned}
			\partial_tQ_{\en}+2\partial_{XY}^L\Delta_L^{-1}Q_{\en}-&2\nabla_L\partial_X\Delta_L^{-1}Q^2_{\en}+\Delta_L(U\cdot\nabla_LU)_{\en}-\Delta_L(B\cdot\nabla_LB)_{\en}\\
			&+\left(\begin{array}{l}
				Q^2_{\en}\\0\\0
			\end{array}\right)=
			\nabla_L(\partial^L_jU^i\partial_i^LU^j-\partial^L_jB^i\partial^L_iB^j)_{\en}+\nu\Delta_L Q_{\en},\\
			\partial_tG_{\en}+2\partial_{XY}^L\Delta_L^{-1}G_{\en}+&\Delta_L(U\cdot\nabla_LB)_{\en}-\Delta_L(B\cdot\nabla_LU)_{\en}-\left(\begin{array}{l}
				G^2_{\en}\\0\\0
			\end{array}\right)=\nu\Delta_L G_{\en},
		\end{aligned}\right.
	\end{equation}
	respectively.
	\subsection{The Fourier multipliers}
	We introduce the following multipliers to capture the stability mechanisms of the above systems. The first two multipliers $M^1$ and $M^2$ are now standard for studying the stability of Couette flow. They are determined by the equations
	\begin{equation*}
		\left\{\begin{array}{l}
			\displaystyle\frac{\partial_tM_1}{M_1}=-\frac{k^2+|kl|}{k^2+(\eta-kt)^2+l^2},\ \mathrm{for}\ k\neq0,\\
			\displaystyle\frac{\partial_tM_2}{M_2}=-\frac{\nu^{1/3}k^2}{k^2+\nu^{2/3}(\eta-kt)^2},\ \mathrm{for}\ k\neq0,\\
			M_j(t,0,\eta,l)=M_j(0,k,\eta,l)=1,\ j=1,2.
		\end{array}\right.
	\end{equation*}
	The multiplier $M_1$ helps us to obtain inviscid damping and control terms arising from the linear pressure. The multiplier $M_2$ is used to capture the enhanced dissipation, which satisfies the key property:
	\begin{equation}\label{prom3}
		\nu^{1/3}\lesssim\nu(\eta-kt)^2-\frac{\partial_tM_2}{M_2}.
	\end{equation}
	We also need a multiplier $M_3$ introduced in \cite{WZ23}, which helps us to control the nonlinear forcing cascades on the zero-mode in this paper, defined as
	\begin{equation*}
		\left\{\begin{array}{l}
			\displaystyle\frac{\partial_tM_3}{M_3}=-\Upsilon(t,k,\eta,l),\\
			M_3(0,k,\eta,l)=1,
		\end{array} 
		\right.
	\end{equation*} 
	with
	\begin{equation*}
			\Upsilon(t,k,\eta,l)=\sum_{k'\neq k}\frac{1}{|k-k'|}\frac{1+|k-k'|+|k'|}{(1+|k-k'|+|k'|)^2+(\eta-(k-k')t)^2}.\label{defup}
	\end{equation*}
	Letting $\delta_0=\frac{1}{100|\alpha|}$, we define the main multiplier as
	\begin{equation*}
		M(t,k,\eta,l)=\left\{\begin{aligned}
			&e^{\delta_0\nu^{1/3}t}(M_1M_2)(t,k,\eta,l),\ \mathrm{for}\ k\neq 0,\\
			&M_3(t,0,\eta,l),\ \mathrm{for}\ k=0.
		\end{aligned}\right.
	\end{equation*}
	\subsection{Bootstrap argument}\label{secbs}
	To prove Theorem \ref{mainthe}, we use a bootstrap argument. By a standard local well-posedness argument, we state the following lemma without showing more details. 
	\begin{lemma}\label{st}
		Assume $\nu=\mu\in(0,1]$, $\sigma=\frac{q}{p}\in\mathbb{Q}$, $\alpha>8p$, and $N>9/2$. Let $(u_{in},b_{in})$ be the initial datum of \eqref{pmhd}. There exists $t_0\ll1$ independent of $\nu$ such that if $\|(u_{in},b_{in})\|_{H^{N+2}}\leq\epsilon$, then there holds
		\begin{equation*}
			\sup_{t\in[0,2t_0]}\|(U(t),B(t))\|_{H^{N+2}}\leq2\epsilon.
		\end{equation*}
	\end{lemma}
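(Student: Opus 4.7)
The plan is to run a standard short-time energy estimate directly on the transformed system for $(U,B)$ on the interval $[0,2t_0]$. Since $t_0$ is allowed to depend on nothing beyond the dimension and the constants in the equation, we do not need to extract any enhanced dissipation, inviscid damping, or symmetrization structure; the viscous term $\nu\Delta_L U$ has a good sign and can be retained or dropped trivially, so the estimate will automatically be uniform in $\nu\in(0,1]$.

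First I would apply $\partial^\beta$ with $|\beta|\le N+2$ to each of the two transformed equations, pair with $\partial^\beta U$ and $\partial^\beta B$ in $L^2$, and sum in $\beta$. Several terms are easy to dispose of: the background magnetic interaction $-\alpha\partial_\sigma B$ in the $U$-equation cancels against $-\alpha\partial_\sigma U$ in the $B$-equation after pairing, since $\partial_\sigma=\sigma\partial_X+\partial_Z$ is antisymmetric; the lift-up terms $\pm(U^2,0,0)^T$ and $\mp(B^2,0,0)^T$ contribute at most $C\|(U,B)\|_{H^{N+2}}^2$; and the linear pressure correction $2\nabla_L\Delta_L^{-1}\partial_X U^2$ is a Fourier multiplier of order zero with symbol $ik(k,\eta-kt,l)^T/(k^2+(\eta-kt)^2+l^2)$, whose $L^\infty$ bound is uniform in $t$ (trivially so on the short interval $t\le 2t_0\le 1$). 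For the nonlinear transport terms $U\cdot\nabla_L U$, $B\cdot\nabla_L B$, $U\cdot\nabla_L B$, $B\cdot\nabla_L U$, since $N+2>13/2>5/2$, the space $H^{N+2}(\mathbb{T}\times\mathbb{R}\times\mathbb{T})$ is an algebra, and on $[0,2t_0]$ one has $|\nabla_L|\lesssim |\nabla|+t_0|\partial_X|\lesssim \langle\nabla\rangle$, yielding the bound $C\|(U,B)\|_{H^{N+2}}^3$. The nonlinear pressure contribution $\nabla_L P^{NL}$ is a zero-order Fourier multiplier (bounded uniformly on the short interval) applied to a product of first derivatives and is estimated identically.

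Combining, one obtains the differential inequality
\begin{equation*}
\frac{d}{dt}\|(U,B)\|_{H^{N+2}}^2\le C\bigl(\|(U,B)\|_{H^{N+2}}^2+\|(U,B)\|_{H^{N+2}}^3\bigr),
\end{equation*}
with $C$ depending only on $N$, $\alpha$, $\sigma$. Starting from $\|(U(0),B(0))\|_{H^{N+2}}=\|(u_{in},b_{in})\|_{H^{N+2}}\le\epsilon\le 1$, a direct Gronwall/continuity argument shows that there is an absolute $t_0\ll 1$, independent of $\nu$ and $\epsilon$, for which $\sup_{t\in[0,2t_0]}\|(U(t),B(t))\|_{H^{N+2}}\le 2\epsilon$.

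The only mildly delicate point is the time dependence in $\nabla_L$ and in the Leray-type projectors via $\Delta_L^{-1}$: since the symbols involve $(\eta-kt)$, one must verify that for $t\in[0,2t_0]$ with $t_0\le 1/2$ these operators are bounded on $H^{N+2}$ with constants independent of $\nu$. This is routine because $|\eta-kt|\lesssim\langle\eta\rangle+\langle k\rangle$ on that interval, so both $\nabla_L\Delta_L^{-1}\partial_X$ and $\nabla_L\Delta_L^{-1}\nabla_L$ have $L^\infty$ symbols bounded by a universal constant. No issue arises from the initial regularity loss $H^{N+2}\to H^{N+2}$ since we are only asking for propagation at the same level of regularity on a short time interval.
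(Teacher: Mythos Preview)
The paper does not prove this lemma beyond declaring it ``a standard local well-posedness argument,'' and your sketch is exactly the expected fleshing-out of that phrase. One small correction: for the transport-type terms $U\cdot\nabla_L U$, $B\cdot\nabla_L B$, $U\cdot\nabla_L B$, $B\cdot\nabla_L U$, the algebra property alone loses a derivative; you need the usual cancellation of the top-order pieces via $\nabla_L\cdot U=\nabla_L\cdot B=0$ (the self-transport terms $\langle U\cdot\nabla_L\partial^\beta U,\partial^\beta U\rangle$, $\langle U\cdot\nabla_L\partial^\beta B,\partial^\beta B\rangle$ vanish, and the cross pair $\langle B\cdot\nabla_L\partial^\beta B,\partial^\beta U\rangle+\langle B\cdot\nabla_L\partial^\beta U,\partial^\beta B\rangle=0$), after which only commutators remain and those are indeed controlled by the product estimates you invoke. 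With that understood, your differential inequality and the Gronwall/continuity step are correct and give $t_0$ independent of $\nu$ and $\epsilon$. As a side remark, both pressure contributions in fact vanish identically after pairing, since $\partial^\beta$ commutes with $\nabla_L$ and $\nabla_L\cdot\partial^\beta U=0$, so no symbol bound is even needed there.
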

	From now on, all time norms are taken over the interval $[t_0,\et]$ unless otherwise stated.\\
		\textbf{Bootstrap hypotheses:}
		We introduce two norms 
		\begin{align*}
			&\|f\|_{A^N}:=\|f\|_{L^{\infty}H^N}+\nu^{1/2}\|\nabla_Lf\|_{L^2H^N}+\|\partial_X|\nabla_L|^{-1}f\|_{L^2H^N}+\nu^{1/6}\|f\|_{L^2H^N},\\
			&\|g\|_B:=\|g\|_{L^{\infty}H^N}+\nu^{1/2}\|\nabla_Lg\|_{L^2H^N}+\|\sqrt{-\Upsilon}g\|_{L^2H^N}.
		\end{align*}
		Fix $C_0$ a large constant determined by the proof below and let $\et> t_0$ be the maximal time such that the following estimates hold on $[t_0,\et]$:
		\begin{enumerate}
			\item The \sm bounds:
			\begin{subequations}\label{bss}
				\begin{align}
					&\|(\partial_X,\partial_Z)|\nabla_L|MW^2_{\es}\|_{A^N}\leq8\epsilon,\label{bssymv2}\\
					&\|MF^2_{\es}\|_{A^N}\leq 8C_0\nu^{-1/3}\epsilon,\label{bsf2}\\
					&\|(\partial_X,\partial_Z)^2MW^3_{\es}\|_{A^N}\leq8C_0\epsilon,\label{bsz3}\\
					& \|MF^3_{\es}\|_{A^N}\leq8C_0^2\nu^{-2/3}\epsilon\label{bsf3};
				\end{align}
			\end{subequations}
			\item The \nm of velocity bounds:
			\begin{subequations}\label{bsnu}
				\begin{align}
					&\|MQ^2_{\en}\|_{A^N}\leq8\nu^{-1/3}\epsilon,\label{bsq2}\\
					&\|\langle t\rangle^{-1}MQ^2_{\en}\|_{A^{N-1}}\leq8\epsilon,\label{bsq2l}\\
					&\nu^{1/3}\|\partial_{XX }MU^3_{\en}\|_{A^N}+\|\partial_{XX}MU^3_{\en}\|_{A^{N-2}}\leq8C_0\epsilon,\label{bsu3}\\
					&\nu^{1/3}\|MQ^3_{\en}\|_{A^N}+\|MQ^3_{\en}\|_{A^{N-2}}\leq8C_0^2\nu^{-2/3}\epsilon;\label{bsq3}
				\end{align}
			\end{subequations}
			\item The \nm of magnetic field bounds
			\begin{subequations}\label{bsnb}
				\begin{align}
					&\|\partial_XMB^2_{\en}\|_{A^N}+\nu^{1/6}\|\partial_{XX}MB^2_{\en}\|_{A^N}\leq8\epsilon,\label{bsb2}\\
					&\nu^{1/3}\|\partial^L_YMB^2_{\en}\|_{A^N}+\nu^{1/2}\|\partial^L_{XY}MB^2_{\en}\|_{A^N}+\nu^{2/3}\|MG^2_{\en}\|_{A^N}\leq8C_0\epsilon,\label{bsh2}\\ 
					&\nu^{1/3}\|\partial_{XX}MB^3_{\en}\|_{A^N}+\|\partial_XMB^3_{\en}\|_{A^N}\leq8C_0\epsilon,\label{bsb3}\\
					&\nu^{1/3}\|MG^3_{\en}\|_{A^{N}}+\|MG^3_{\en}\|_{A^{N-2}}\leq8C_0^2\nu^{-2/3}\epsilon;\label{bsh3}
				\end{align}
			\end{subequations}
			\item The zero-mode bounds:
			\begin{subequations}\label{bs0}
			\begin{align}
				&\|MF^1_0\|_B\leq8\nu^{-2/3}\epsilon,\ \|MF^2_0\|_B\leq8\nu^{-1/3}\epsilon,\label{bsf0}\ \|MF^3_0\|_B\leq8\nu^{-2/3}\epsilon,\\
				&\|(1,\partial_Z)^2MW_0\|_B\leq8\epsilon\label{bsz0}.
			\end{align}
		\end{subequations}
		\end{enumerate}
	Next, we prove that $\et =+\infty$ in the proposition below.
	\begin{proposition}\label{bsup1}
		Under the assumptions of Theorem \ref{mainthe} and bootstrap hypotheses, there exists $0<\epsilon_0=\epsilon_0(N,\sigma)$ such that if $\epsilon<\epsilon_0\nu$, then estimates \eqref{bss}-\eqref{bs0} hold with all the constants on the right-hand side divided by 2 on $[t_0,\et]$. It follows by continuity that $\et=+\infty$.
	\end{proposition}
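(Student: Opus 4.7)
The plan is a standard but lengthy bootstrap: perform weighted $H^N$ energy estimates (with the multiplier $M$ adapted to each mode) on every quantity appearing in \eqref{bss}--\eqref{bs0}, isolate the favorable linear structure, and show that the nonlinear contributions are bounded by $\epsilon\cdot(\text{R.H.S.\ of bootstrap})$. Since $\epsilon\leq\epsilon_0\nu$, choosing $\epsilon_0$ small will allow us to absorb these into the left-hand side and divide the bootstrap constants by $2$. I would organize the work into four blocks corresponding to the four families of bounds, treated in the order listed.

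\textbf{Block I (non-homogeneous modes).} For $F^{\pm,i}_{\neq NH}$ with $i=2,3$ and for $\partial_X|\nabla_L|W^{\pm,2}_{\neq NH}$, I would apply the energy method to \eqref{eqf} and \eqref{eqsymv2}. The oscillation $T^{-t}_{\pm 2\alpha}$ together with $|\sigma k+l|\geq 1/p$ lets me integrate by parts in time (as in \cite{L20}) on the linear cross-terms $T^{-t}_{\pm 2\alpha}F^{\mp,2}$ at the cost of $(p/\alpha)$ factors, which is why the hypothesis $|\alpha|>8p$ enters. The pointwise inequality \eqref{prom3} and the identity $-\dot M_1/M_1\gtrsim (k^2+|kl|)/(k^2+(\eta-kt)^2+l^2)$ provide enhanced dissipation and inviscid damping terms on the left. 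For the nonlinearities, a paraproduct split as defined in the preliminaries handles the transport terms; the high-low pieces are estimated by $W^{Lo}$ in $L^\infty$ via Sobolev embedding (using $N>9/2$), while the low-high pieces use the CK (commutator) terms produced by $M_1M_2$. The amplification $\nu^{-1/3}$ in $F^{\pm,2}_{\neq NH}$ (respectively $\nu^{-2/3}$ in $F^{\pm,3}_{\neq NH}$) matches that of $G^2_{\neq H}$ and $G^3_{\neq H}$, so the bootstrap chain is self-consistent.

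\textbf{Block II (homogeneous modes).} For $(U_{\neq H},B_{\neq H})$ I work with \eqref{equb}--\eqref{eqqh} directly since the linearization decouples. The estimates on $Q^2_{\neq H},\,U^3_{\neq H},\,Q^3_{\neq H}$ proceed as in \cite{WZ21}, but the nonlinear term $\Delta_L(B\cdot\nabla_L B^2)_{\neq H}$ forces an extra $\nu^{-1/3}$ in $Q^2_{\neq H}$ because $B^2_{\neq H}$ does not damp. I would exploit the weighted bound \eqref{bsq2l}, which trades one power of $\langle t\rangle$ for one derivative, to keep the regularity budget under control. For the magnetic side, $G^2_{\neq H}$ and $G^3_{\neq H}$ carry the $\nu^{-2/3}$ amplification from the linear system; the $\nu^{-1/3}$ growth of $\partial_X B^2_{\neq H}/\partial_X B^3_{\neq H}$ is then recovered via $(\partial_X)\nabla_L^{-2}G^i_{\neq H}$ together with the inviscid damping mechanism in $\partial^L_Y B^2_{\neq H}$, for which the hypothesis \eqref{bsh2} with its $\nu^{1/2}$ weight on $\partial^L_{XY}MB^2_{\neq H}$ is essential.

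\textbf{Block III (zero-mode) and closure.} The zero-mode equations receive forcing only through the quadratic interactions of nonzero modes, so the $\sqrt{-\Upsilon}$ norm in $B$ plays the role of the enhanced dissipation norm; the $M_3$ multiplier absorbs nonlinear frequency cascades. The hardest term, as flagged in the introduction, is $\langle (U^{Hi,2}_{\neq H}\cdot\partial_Y^L G^{Lo,1}_{\neq H})_0,G^1_0\rangle_{H^N}$: here I have no uniform $H^N$ bound on $U^2_{\neq H}$ and $\partial_Y^L$ costs regularity. I would follow the $M_3$-based strategy of \cite{WZ23}, pairing the $\sqrt{-\Upsilon}$ dissipation on $G^1_0$ with the $L^2H^{N-2}$ inviscid-damping bound on $U^2_{\neq H}$ implicit in \eqref{bsq2l} and the $\nu^{-1/3}$ bound on $G^{1}_{\neq H}$ from Block II, checking that the resulting estimate is $\lesssim \epsilon\cdot\nu^{-2/3}\epsilon$, which under $\epsilon\leq\epsilon_0\nu$ is much smaller than $\nu^{-2/3}\epsilon$. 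Once all nonlinear contributions are shown to be $\leq C\epsilon\nu^{-1}\cdot(\text{bootstrap bound})$, one picks $\epsilon_0$ with $CC_0\epsilon_0\leq 1/4$, which yields the improved constants; continuity of the norms in $t$ then forces $\et=+\infty$. The principal obstacle throughout is bookkeeping the three different amplification scales ($1,\nu^{-1/3},\nu^{-2/3}$) against the regularity levels $H^N,H^{N-1},H^{N-2}$ so that every nonlinear term distributes its derivatives and its powers of $\nu^{-1/3}$ in a way compatible with the bootstrap hierarchy.
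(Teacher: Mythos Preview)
Your four-block organization and the main tools (time-integration-by-parts on the $T^{-t}_{\pm2\alpha}$ oscillatory cross terms, paraproducts, the $M_3$ multiplier for the zero mode) match the paper's approach. However, your Block~III treatment of the hardest zero-mode term has a genuine gap. You propose to bound
\[
\big\langle (U^{Hi,2}_{\en}\,\partial_Y^L G^{Lo,1}_{\en})_0,\,G^1_0\big\rangle_{H^N}
\]
by placing $U^{2}_{\en}$ in $L^2H^{N-2}$ via \eqref{bsq2l}. That cannot work: $U^2_{\en}$ is the \emph{high}-frequency factor here, so the full $H^N$ weight lands on it, and at $H^N$ there is no uniform inviscid-damping bound on $U^2_{\en}$ --- only the $\nu^{-1/3}$-amplified bound \eqref{estu2}. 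There is also no ``$\nu^{-1/3}$ bound on $G^1_{\en}$'': by divergence-freeness $G^1_{\en}$ inherits $\nu^{-2/3}$ or worse from $G^2_{\en},G^3_{\en}$.

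The paper's argument for this term (their $I_1$ in Section~\ref{secf0}) is a pointwise Fourier computation, not a soft norm pairing. After writing $\partial_Y^L F^1_{\en}=-\partial_Y^L\partial_Y^L\partial_X^{-1}F^2_{\en}+\sigma\partial_Y^L F^3_{\en}$, the dangerous piece carries \emph{two} factors of $\partial_Y^L$ on the low-frequency $F^2_{\en}$. One uses $|\xi-kt|^2\lesssim\nu^{-2/3}e^{\delta_0\nu^{1/3}t}|k,\xi|^2$ to trade these for a $\nu^{-2/3}$ loss plus harmless derivatives on the low factor, redistributes one $|\nabla_L|$ back onto $U^2_{\en}$ so that the full-regularity bound \eqref{estu2} applies, and then evaluates the remaining $\xi$-integral against $\widehat{F^1_0}(0,\eta,0)$ exactly via
\[
\int_{\mathbb R}\frac{d\xi}{(a^2+\xi^2)(b^2+(c-\xi)^2)}=\frac{\pi}{ab}\,\frac{a+b}{(a+b)^2+c^2},
\]
which reproduces the kernel $\Upsilon$ on the nose. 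This identification is what makes the $\sqrt{-\Upsilon}$ dissipation on $F^1_0$ usable; the resulting bound is $I_1\lesssim\nu^{-1}\epsilon\,(\nu^{-2/3}\epsilon)^2$, not the $\epsilon\cdot\nu^{-2/3}\epsilon$ you state. You should also recheck the Block~II growth rates: $\partial_X B^2_{\en}$ and $\partial_X B^3_{\en}$ are \emph{uniformly} bounded in \eqref{bsb2}--\eqref{bsb3}, not $\nu^{-1/3}$-amplified, and they are obtained directly from the $B$-equation (which has no pressure), not from $\nabla_L^{-2}G^i_{\en}$.
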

	
	The proof of Proposition \ref{bsup1} constitutes the majority of our work. Before proving it, let us briefly comment on the structure of bootstrap hypotheses. For the \sm, the hypothesis \eqref{bss} is consistent with the result in \cite{L20} but without the regularity losses. Turning to the \nm of velocity, distinct regularity treatments are required for different quantities. As noted earlier, we anticipate the $\nu^{-1/3}$ growth of $Q^2_{\en}$ \eqref{bsq2} in $H^N$. The linear pressure term $\partial^4_{X}\Delta_L^{-1}U^2_{\en}$ leads to an amplification of order $\nu^{-1/3}$ of $\partial_{X}^2U^3_{\en}$ in \eqref{bsu3}. By sacrificing one degree of regularity, we achieve the uniform bound of $\langle t\rangle^{-1}Q^2_{\en}$ in \eqref{bsq2l}, thereby securing the uniform bound of $\partial_X^2U^3_{\en}$ in $H^{N-2}$. 
	For the \nm of magnetic fields, while the presence of pressure term restricts our estimates to the quantity $Q^2_{\en}$, its absence enables us to deal with $\nabla_LB^2_{\en}$ and $\nabla_L^2B^2_{\en}$ in \eqref{bsb2} and \eqref{bsh2}. In particular, the nonlinear stretching term $ \partial_X^2(B^1_{\en}\partial_XU^2_{\en})$ causes the $\nu^{-1/6}$ growth of $\partial_{X}^2B^2_{\en}$ in \eqref{bsb2}.
	For the zero-mode, the bound of $F^2_0$ is smaller than those for $F^1_0$ and $F^3_0$ in \eqref{bsf0}. In fact, by the divergence-free condition, $F^2_0$ always has a nonzero $Z$-frequency, which ensures that there is not nonlinear interaction between two homogeneous modes in the energy estimate of $F^2_0$. The same idea is used to obtain the bound of $\partial_{Z}^2W_0$ in \eqref{bsz0}.
	
	Finally, we prove some useful estimates that follow immediately from the bootstrap hypotheses.
	\begin{lemma}\label{est1}
		Under the bootstrap hypotheses, the following estimates hold on $[t_0,\et]$:
		\begin{subequations}
			\begin{align}
				&\|\nabla_{X,Z}\partial_XMW^1_{\es}\|_{L^\infty H^N}+\nu^{1/2}\|\nabla_L\nabla_{X,Z}\partial_XMW^1_{\es}\|_{L^2H^N}+\nu^{1/6}\|\nabla_{X,Z}\partial_XMW^1_{\es}\|_{L^2H^N}\lesssim\epsilon,\\
				&\|\partial_X^2MU^1_{\en}\|_{L^\infty H^N}+\nu^{1/2}\|\nabla_L\partial_X^2MU^1_{\en}\|_{L^2H^N}+\nu^{1/6}\|\partial_X^2MU^1_{\en}\|_{L^2H^N}\lesssim\nu^{-1/3}\epsilon,\\
				&\|\partial_X^2MU^1_{\en}\|_{L^\infty H^{N-2}}+\nu^{1/2}\|\nabla_L\partial_X^2MU^1_{\en}\|_{L^2H^{N-2}}+\nu^{1/6}\|\partial_X^2MU^1_{\en}\|_{L^2H^{N-2}}\lesssim\epsilon,\\
				&\|\partial_XMB^1_{\en}\|_{L^\infty H^N}+\nu^{1/2}\|\nabla_L\partial_XMB^1_{\en}\|_{L^2H^N}+\nu^{1/6}\|\partial_X^2MB^1_{\en}\|_{L^2H^N}\lesssim\nu^{-1/3}\epsilon,\\
				&\|\partial_X|\nabla_L|MU^2_{\en}\|_{L^2H^N}\lesssim\nu^{-1/3}\epsilon\label{estu2},\\
				&\|\partial_{XXX}U^2_{\en}\|_{L^2H^{N-2}}\lesssim\epsilon,\label{estu2l}
			\end{align}
		\end{subequations}	
		where $\nabla_{X,Z}=(\partial_X,\partial_Z)$.
	\end{lemma}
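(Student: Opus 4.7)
The proof of Lemma~\ref{est1} rests on three observations: (i) the divergence-free conditions $\nabla_L\cdot U=\nabla_L\cdot B=0$, which by commutativity with the phase $T^{-t}_{\pm\alpha}$ also give $\nabla_L\cdot W^{\pm}=0$; (ii) on the \nm, $\sigma k+l=0$ forces $\partial_Z=-\sigma\partial_X$, so $\partial_Z$-derivatives on the \nm reduce to $\partial_X$-derivatives up to a factor of $-\sigma$; (iii) elementary symbol bounds such as $|k|\leq (1+\sigma^2)^{-1/2}|\nabla_L|$ on the \nm, $|\eta-kt|\leq|\nabla_L|$, and $|k(\eta-kt)|\leq|\nabla_L|^2/2$ by AM--GM.

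Estimate~(a) follows from $\partial_X W^{\pm,1}_{\es}=-\partial_Y^L W^{\pm,2}_{\es}-\partial_Z W^{\pm,3}_{\es}$: after applying $\nabla_{X,Z}M$, the first piece is controlled by \eqref{bssymv2} (dominating $|\partial_Y^L|\leq|\nabla_L|$ in Fourier) and the second by \eqref{bsz3}. The \nm bound (b) uses the analog $\partial_X^2 U^1_{\en}=-\partial_X\partial_Y^L U^2_{\en}+\sigma\partial_X^2 U^3_{\en}$: the factorization $\partial_X\partial_Y^L\Delta_L^{-1}=-(\partial_X|\nabla_L|^{-1})(\partial_Y^L|\nabla_L|^{-1})$ reduces the first piece to an $MQ^2_{\en}$-bound supplied by \eqref{bsq2}, while the second is directly controlled by \eqref{bsu3}. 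Estimate~(d) proceeds identically for $B$, using \eqref{bsb2}, \eqref{bsb3}, and \eqref{bsh2}; in particular the critical piece $\nu^{1/6}\|\partial_X^2 MB^1_{\en}\|_{L^2 H^N}$ is obtained by routing $\partial_X\partial_Y^L MB^2_{\en}$ through the $\nu^{1/2}\|\nabla_L\partial_X MB^2_{\en}\|_{L^2H^N}$ term of \eqref{bsb2}, which yields the sharp $\nu^{-1/3}$ rate (whereas \eqref{bsh2} alone would give only $\nu^{-1/2}$). Estimate~(e) is the algebraic identity $\partial_X|\nabla_L|U^2_{\en}=-\partial_X|\nabla_L|^{-1}Q^2_{\en}$ paired with the $\partial_X|\nabla_L|^{-1}$ term of $\|MQ^2_{\en}\|_{A^N}$ in \eqref{bsq2}.

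The main obstacle is the pair (c) and (f), whose right-hand side is $\epsilon$ rather than $\nu^{-1/3}\epsilon$, and which therefore require genuine use of the refined hypothesis \eqref{bsq2l} controlling $\langle t\rangle^{-1}MQ^2_{\en}$ in $A^{N-1}$. For (c) the $\sigma\partial_X^2 U^3_{\en}$ piece is immediate from the $A^{N-2}$ part of \eqref{bsu3}; for the remaining $\partial_X\partial_Y^L U^2_{\en}$ I would apply the same multiplier factorization as in (b), pair it with the $\partial_X|\nabla_L|^{-1}\langle t\rangle^{-1}MQ^2_{\en}$ contribution of \eqref{bsq2l}, and absorb the $\langle t\rangle^{-1}$ via the explicit time decay of the symbol $k(\eta-kt)/|\nabla_L|^2$ away from the critical time $t=\eta/k$. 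For (f) the parallel factorization $\partial_X^3\Delta_L^{-1}=\partial_X|\nabla_L|^{-1}\cdot\partial_X^2|\nabla_L|^{-1}$ together with the $L^2 H^{N-1}$ component of \eqref{bsq2l} closes the estimate; the missing derivative is traded for the $\langle t\rangle$ weight through the change of variables $\tau=t-\eta/k$, under which $\int (k^3/|\nabla_L|^2)^2\,dt\lesssim k^2$ and the peak of the symbol is localized to an interval of length $\lesssim 1/|k|$ around $t=\eta/k$.
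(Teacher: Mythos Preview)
Your treatment of (a), (b), (d), (e) is correct and matches the paper's reasoning. The divergence-free reduction and the routing of $\nu^{1/6}\|\partial_X^2 MB^1_{\en}\|_{L^2H^N}$ through the $\nu^{1/2}\|\nabla_L\partial_X MB^2_{\en}\|_{L^2H^N}$ piece of \eqref{bsb2} rather than \eqref{bsh2} are exactly right.

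For (c) and (f), however, your argument is more complicated than necessary and contains a gap. The paper handles both in one stroke via the pointwise symbol inequality
\[
|k,\eta-kt,l|^{-1}\;\lesssim\;(k^2\langle t\rangle)^{-1}\langle k,\eta,l\rangle,
\]
which is easily checked by splitting into the cases $|\eta|\geq|kt|/2$ and $|\eta|<|kt|/2$. This inequality converts one factor of $|\nabla_L|^{-1}$ into $\langle t\rangle^{-1}$ at the cost of one Sobolev derivative, precisely the trade needed to invoke \eqref{bsq2l}. For (f), writing $\partial_X^3\Delta_L^{-1}=-(\partial_X|\nabla_L|^{-1})(\partial_X^2|\nabla_L|^{-1})$ and applying the inequality to the second factor gives $\|\partial_{XXX}U^2_{\en}\|_{H^{N-2}}\lesssim\|\partial_X|\nabla_L|^{-1}\langle t\rangle^{-1}Q^2_{\en}\|_{H^{N-1}}$, and \eqref{bsq2l} then closes the $L^2$-in-time bound. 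Your time-integral computation $\int(k^3/|\nabla_L|^2)^2\,dt\lesssim k^2$ is correct for a time-independent integrand but does not apply directly, since $\hat Q^2_{\en}$ depends on $t$ and one cannot interchange $\sup_t$ with the $(k,\eta,l)$-sum; the pointwise inequality is what is actually needed.

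For (c) your specific factorization fails. You propose to keep the factor $\partial_X|\nabla_L|^{-1}$ and pair with the $\partial_X|\nabla_L|^{-1}$ component of $A^{N-1}$ in \eqref{bsq2l}; this would require $\frac{|\eta-kt|}{|\nabla_L|}\langle t\rangle\lesssim\langle k,\eta,l\rangle$, which is false (take $\eta=0$ and $t$ large: the left side is $\sim t$, the right side is bounded). Moreover that route is an $L^2$-in-time bound only, whereas (c) also demands $L^\infty H^{N-2}$. The correct step is to first use $|\eta-kt|/|\nabla_L|\leq1$ and then apply the displayed inequality to the remaining $|\nabla_L|^{-1}$, obtaining $\frac{|k(\eta-kt)|}{|\nabla_L|^2}\lesssim\frac{\langle k,\eta,l\rangle}{\langle t\rangle}$ and hence $\|\partial_X\partial_Y^LMU^2_{\en}\|_{H^{N-2}}\lesssim\|\langle t\rangle^{-1}MQ^2_{\en}\|_{H^{N-1}}$. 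All three pieces of (c) then follow from the $L^\infty H^{N-1}$, $\nu^{1/2}\|\nabla_L\cdot\|_{L^2H^{N-1}}$, and $\nu^{1/6}\|\cdot\|_{L^2H^{N-1}}$ components of \eqref{bsq2l} rather than from its $\partial_X|\nabla_L|^{-1}$ component.
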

	\begin{proof}
		By divergence-free condition, for $V\in\{W,U,B\}$, we have
		\begin{align*}
			\|\partial_XV^1_{\es}\|_{H^N}\lesssim\|\partial_Y^LV^2_{\es}\|_{H^N}+\|\partial_ZV^3_{\es}\|_{H^N},\quad\|\partial_XV^1_{\en}\|_{H^N}\lesssim\|\partial_Y^LV^2_{\en}\|_{H^N}+\|\partial_XV^3_{\en}\|_{H^N}.
		\end{align*}
		Then by \eqref{prom3} and  $|k,\eta-kt,l|^{-1}\lesssim(k^2\langle t\rangle)^{-1}\langle k,\eta,l\rangle$, the estimates follow immediately from the bootstrap hypotheses.
	\end{proof}
	\section{Energy estimates on \sm}\label{secs}

	In this section, we improve the bootstrap hypothesis \eqref{bss}. To systematically analyze the nonlinear terms, we introduce the following notations. For the nonlinear transport term of $\partial_X|\nabla_L|MW^{+,2}_{\es}$ in \eqref{ee1}, we denote
	\begin{equation*}
		NLT=\int_{t_0}^{\et}\left\langle \partial_X|\nabla_L|((T^{-t}_{+2\alpha}W^{-})\cdot\nabla_LW^{+,2})_{\es},\partial_X|\nabla_L|MW^{+,2}_{\es}\right\rangle_{H^N} dt,
	\end{equation*} as well as 
	\begin{equation*}
		NLT^{D}(j,s_1,s_2)=\int_{t_0}^{\et}\left\langle \partial_X|\nabla_L|((T^{-t}_{+2\alpha}W^{-,j}_{s_1})^{D_1}(\partial^L_jW^{+,2}_{s_2})^{D_2}),\partial_X|\nabla_L|MW^{+,2}_{\es}\right\rangle_{H^N} dt,
	\end{equation*}
	with $D\in\{HL,LH\},D_1,D_2\in\{Hi,Lo\},j\in\{1,2,3\}$, and $s_1,s_2\in\{\es,\en,0\}$. 
	When indices $D,j,s_1,s_2$ are omitted or replaced by $``\cdot"$, it indicates no restrictions on those parameters. For example
	\begin{equation*}
		NLT(1)=NLT(1,\cdot,\cdot)=\int_{t_0}^{\et}\left\langle \partial_X|\nabla_L|((T^{-t}_{+2\alpha}W^{-,1})\partial_XW^{+,2})_{\es},\partial_X|\nabla_L|MW^{+,2}_{\es}\right\rangle_{H^N} dt,
	\end{equation*}
	and
	\begin{equation*}
		NLT^{LH}(\cdot,\cdot,\en)=\int_{t_0}^{\et}\left\langle \partial_X|\nabla_L|((T^{-t}_{+2\alpha}W^{-,Lo})\cdot\nabla_LW^{+,2,Hi}_{\en}),\partial_X|\nabla_L|MW^{+,2}_{\es}\right\rangle_{H^N} dt.
	\end{equation*}
	For two indices terms, such as the nonlinear pressure term of $\partial_X|\nabla_L|MW^{+,2}_{\es}$ in \eqref{ee1},
	we define the contributions as
	\begin{equation*}
	 	NLP^{D}(i,j,s_1,s_2)=\int_{t_0}^{\et}\left\langle \frac{\partial^L_{XY}}{|\nabla_L|}(\partial^L_j(T^{-t}_{+2\alpha }W^{-,i}_{ s_1})^{D_1}(\partial^L_iW^{+,j}_{ s_2})^{D_2}),\partial_X|\nabla_L|MW^{+,2}_{\es}\right\rangle_{H^N} dt\\,
	\end{equation*}
	with $D\in\{HL,LH\},D_1,D_2\in\{Hi,Lo\},i,j\in\{1,2,3\}$, and $s_1,s_2\in\{\es,\en,0\}$. Then the complete nonlinear pressure term is expressed as
	\begin{equation*}
		NLP=\sum_{i,j=1}^{3}NLP(i,j,\cdot,\cdot).
	\end{equation*} 
	Analogous notations will be adopted for other nonlinear terms.	 
	\subsection{Estimate of $(\partial_X,\partial_Z)|\nabla_L|W^{2}_{\es}$}
	In this subsection we improve \eqref{bssymv2}.
	We only consider $\partial_X|\nabla_L|W^{\pm,2}_{\es}$, and the estimate of $\partial_Z|\nabla_L|W^{\pm,2}_{\es}$ is similar. Recalling the equation of $\partial_X|\nabla_L|W^{\pm,2}_{\es}$ \eqref{eqsymv2}, it suffices to estimate $\partial_X|\nabla_L|W^{+,2}_{\es}$, with $\partial_X|\nabla_L|W^{-,2}_{\es}$ treated similarly. An energy estimate gives
	\begin{equation}\label{ee1}
		\begin{aligned}
			&\frac{1}{2}\|\partial_X|\nabla_L|MW^{+,2}_{\es}(\et)\|^2_{H^N}+\nu\|\nabla_L\partial_X|\nabla_L|MW^{+,2}_{\es}\|^2_{L^2H^N}+\sum_{i=1}^{2}\left\|\sqrt{-\frac{\partial_t M_i}{M_i}}\partial_X|\nabla_L|MW^{+,2}_{\es}\right\|^2_{L^2H^N}\\
			&=\frac{1}{2}\|\partial_X|\nabla_L|MW^{+,2}_{\es}(t_0)\|^2_{H^N}+\delta\nu^{1/3}\int_{t_0}^{\et}\|\partial_X|\nabla_L|MW^{+,2}_{\es}\|^2_{H^N}dt\\
			&\quad+\int_{t_0}^{\et}\left\langle \frac{\partial^L_{XXY}}{|\nabla_L|}T^{-t}_{2\alpha}W^{+,2}_{\es},\partial_X|\nabla_L|MW^{+,2}_{\es}\right\rangle_{H^N} dt\\
			&\quad-\int_{t_0}^{\et}\left\langle \partial_X|\nabla_L|M((T^{-t}_{2\alpha}W^{-})\cdot\nabla_LW^{+,2})_{\es},\partial_X|\nabla_L|MW^{+,2}_{\es}\right\rangle_{H^N} dt\\
			&\quad+\int_{t_0}^{\et}\left\langle \frac{\partial^L_{XY}}{|\nabla_L|}M(\partial^L_j(T^{-t}_{2\alpha }W^{-,i})\partial^L_iW^{+,j})_{\es},\partial_X|\nabla_L|MW^{+,2}_{\es}\right\rangle_{H^N} dt\\
			&=:\frac{1}{2}\|\partial_X|\nabla_L|MW^{+,2}_{\es}(t_0)\|^2_{H^N}+L_{\lambda}+OLS+NLT+NLP.
		\end{aligned}
	\end{equation}
	Noting $\delta_0$ sufficiently small and \eqref{prom3}, we obtain 
	\begin{equation}\label{estlambda}
		L_{\lambda}\leq\delta_0\left(\nu\|\nabla_L\partial_X|\nabla_L|MW^{+,2}_{\es}\|^2_{L^2H^N}+\left\|\sqrt{-\frac{\partial_t M_2}{M_2}}\partial_X|\nabla_L|MW^{+,2}_{\es}\right\|^2_{L^2H^N}\right).
	\end{equation}
	The treatment of $OLS$ is similar to that in \cite{L20}. For the sake of completeness, we give a sketch of proof. By integration by parts in time, it holds that
	\begin{align*}
		OLS&=-\frac{1}{2\alpha}\int_{t_0}^{\et}\left\langle \partial^L_{XY}\partial_\sigma^{-1}(\partial_tT^{-t}_{2\alpha})MW^{-,2}_{\es},\partial_{XX}MW^{+,2}_{\es}\right\rangle_{H^N}dt\\
		&\leq\frac{1}{2\alpha}\Bigg(2\left\|\left\langle \partial^L_{XY}\partial_\sigma^{-1}T^{-t}_{2\alpha}MW^{-,2}_{\es},\partial_{XX}MW^{+,2}_{\es}\right\rangle_{H^N}\right\|_{L^{\infty}_t}\\
		&\quad\qquad+\left|\int_{t_0}^{\et}\left\langle\partial_{XX}\partial_\sigma^{-1}T^{-t}_{2\alpha}MW^{-,2}_{\es},\partial_{XX}MW^{+,2}_{\es}\right\rangle_{H^N} dt\right|\\
		&\quad\qquad+\left|\int_{t_0}^{\et}\left\langle\partial_{XY}^L\partial_\sigma^{-1}T^{-t}_{2\alpha}(\partial_t{M}W^{-,2}_{\es}+M\partial_tW^{-,2}_{\es}),\partial_{XX}MW^{+,2}_{\es}\right\rangle_{H^N} dt\right|\\
		&\quad\qquad+\left|\int_{t_0}^{\et}\left\langle\partial_{XY}^L\partial_\sigma^{-1}T^{-t}_{2\alpha}MW^{-,2}_{\es},\partial_{XX}(\partial_t{M}W^{+,2}_{\es}+M\partial_tW^{+,2}_{\es})\right\rangle_{H^N} dt\right|\Bigg)\\
		&=:OLS_1+OLS_2+OLS_3+OLS_4.
	\end{align*}
	For $OLS_1$ and $OLS_2$, by the fact that $|\sigma k+l|\geq 1/p$, $|T^{-t}_{2\alpha}|=1$, $|\alpha|>8p$,
	and bootstrap assumption \eqref{bssymv2}, we obtain that
	\begin{align*}
		&OLS_1+OLS_2\leq \frac{p}{2|\alpha|}\left(\|\partial_X|\nabla_L|MW^{\pm,2}_{\es}\|_{L^\infty H^N}^2+\|\partial_{XX}MW^{\pm,2}_{\es}\|_{L^2 H^N}^2\right)\leq \epsilon^2/2.
	\end{align*}
	Using the equation of $W^{-,2}_{\es}$, we get
	\begin{align*}
		OLS_3\leq&\ \frac{1}{2\alpha}\Bigg(\delta_0\nu^{1/3}\|\partial_X|\nabla_L|MW^{+,2}_{\es}\|^2_{L^2H^N}+\sum_{i=1}^{2}\left\|\sqrt{-\frac{\partial_t M_i}{M_i}}\partial_X|\nabla_L|MW^{+,2}_{\es}\right\|^2_{L^2H^N}\\
		&\qquad+\left|\int_{t_0}^{\et}\left\langle\partial_{XX}\partial_\sigma^{-1}T^{-t}_{2\alpha}M(W^{-,2}_{\es}+T^{-t}_{-2\alpha }W^{+,2}_{\es}),\partial_{XX}MW^{+,2}_{\es}\right\rangle_{H^N} dt\right|\\
		&\qquad+\left|\int_{t_0}^{\et}\left\langle\partial_{XY}^L\partial_\sigma^{-1}T^{-t}_{2\alpha}\nu\Delta_LMW^{-,2}_{\es},\partial_{XX}MW^{+,2}_{\es}\right\rangle_{H^N} dt+\mathcal{NL}\right|\Bigg)\\
		\leq&\ 3\epsilon^2/2+\mathcal{NL},
	\end{align*}
	where
	\begin{align*}
		\mathcal{NL}=& \left|\int_{t_0}^{\et}\left\langle\partial_{XY}^L\partial_\sigma^{-1}T^{-t}_{2\alpha}M((T^{-t}_{-2\alpha }W^+)\cdot\nabla_L W^{-,2})_{\es},\partial_{XX}MW^{+,2}_{\es}\right\rangle_{H^N} dt\right|\\
		&+\left|\int_{t_0}^{\et}\left\langle\partial_{XY}^L\partial_\sigma^{-1}T^{-t}_{2\alpha}M(\partial_Y^L\Delta_L^{-1}(\partial^L_j(T^{-t}_{-2\alpha }W^{+,i})\partial^L_iW^{-,j}))_{\es},\partial_{XX}MW^{+,2}_{\es}\right\rangle_{H^N} dt\right|.
	\end{align*}
	The estimate of $\mathcal{NL}$ can be considered similarly with the nonlinear terms below that arise in the energy estimate. The treatment of $OLS_4$ is exactly the same as the above. Hence we omit further details to conclude the estimate of $OLS$.
	
	Turning to nonlinear terms, for simplicity, we will drop the multiplier $T^{-t}_{2\alpha}$ and superscripts $``+,-"$ which are irrelevant in the nonlinearity.	For $NLT(j),j\in\{1,3\}$, using Lemma \ref{est1}, bootstrap assumptions, and integration by parts, we obtain
	\begin{align*}
		NLT(j)=&\int_{t_0}^{\et}\left\langle \partial_XM(W^j\partial^L_{j}W^{2})_{\es},|\nabla_L|\partial_X|\nabla_L|MW^{2}_{\es}\right\rangle_{H^N} dt\\
		\lesssim&(\|\partial_XMW^j_{\es}\|_{L^2H^N}+\|\partial_XMW^j_{\en}\|_{L^2H^N})\|\partial_{Xj}^LMW^2_{\es}\|_{L^\infty H^N}\|\nabla_L\partial_X|\nabla_L|W^2_{\es}\|_{L^2H^N}\\
		&+\|MW^j_0\|_{L^\infty H^N}\|\partial_{Xj}^LMW^2_{\es}\|_{L^2H^N}\|\nabla_L\partial_X|\nabla_L|MW^2_{\es}\|_{L^2H^N}\\
		&+(\|\partial_XMW^j_{\es}\|_{L^\infty H^N}+\|MW^j_{0}\|_{L^\infty H^N})\|\partial_{XX}MW^2_{\en}\|_{L^2 H^N}\|\nabla_L\partial_X|\nabla_L|MW^2_{\es}\|_{L^2H^N}\\
		&+\mathbf{1}_{j=3}(\|\partial_XMW^3_{\es}\|_{L^2H^N}+\|\partial_XMW^3_{\en}\|_{L^2H^N})\|\partial_{Z}MW^2_{0}\|_{L^\infty H^N}\|\nabla_L\partial_X|\nabla_L|MMW^2_{\es}\|_{L^2H^N}\\
		\lesssim&\ \nu^{-1}\epsilon^3.
	\end{align*} 
	Turning to $NLT(2,\cdot,\en)$, by bootstrap assumptions, it holds that
	\begin{align*}
		NLT(2,\cdot,\en)=&\int_{t_0}^{\et}\left\langle 	\partial_XM((W^2_{\es}+W^2_0)\partial_{Y}^LW^{2}_{\en}),|\nabla_L|\partial_X|\nabla_L|MW^{2}_{\es}\right\rangle_{H^N} dt\\
		\lesssim&(\|\partial_XMW^2_{\es}\|_{L^\infty H^N}+\|MW^2_0\|_{L^\infty H^N})\|\partial_{XY}^LMW^2_{\en}\|_{L^2H^N}\|\nabla_L\partial_X|\nabla_L|MW^2_{\es}\|_{L^2H^N}\\
		\lesssim&\ \nu^{-1}\epsilon^3.
	\end{align*}
	Considering $NLT(2,\en,0)$, we have
	\begin{align*}
		NLT(2,\en,0)=&\int_{t_0}^{\et}\left\langle |\nabla_L|M(\partial_XW^2_{\en}\partial_{Y}W^{2}_{0}),\partial_X|\nabla_L|MW^{2}_{\es}\right\rangle_{H^N} dt\\
		\lesssim&\|\partial_X|\nabla_L|MW^2_{\en}\|_{L^2H^N}\|\partial_YW^2_0\|_{L^2H^N}\|\partial_X|\nabla_L|MW^2_{\es}\|_{L^\infty H^N}\\
		&+\|\partial_XMW^2_{\en}\|_{L^2H^N}\|MF^2_0\|_{L^\infty H^N}\|\partial_X|\nabla_L|MW^2_{\es}\|_{L^2H^N}\\
		\lesssim&\ \nu^{-1}\epsilon^3+\nu^{-5/6}\epsilon^3.
	\end{align*}
	The treatment of the other contributions of $NLT(2)$ is similar to $NLT(3)$. We omit further details and complete the estimate of $NLT$. For $NLP$, we treat the components
	\begin{equation*}
		NLP(i,j)=\int_{t_0}^{\et}\left\langle \frac{\partial^L_{XY}}{|\nabla_L|}M(\partial^L_jW^{i}\partial^L_iW^{j})_{\es},\partial_X|\nabla_L|MW^{2}_{\es}\right\rangle_{H^N} dt
	\end{equation*}
	with $i,j\in\{1,2,3\}$ respectively. For $i=j=1$, integration by parts implies
	\begin{align*}
		NLP(1,1)		\lesssim&\|\partial_XMW^1_{\es}\|^2_{L^2H^N}\|\partial_{X}MW^1_{\es}\|_{L^\infty H^N}\|\partial_{XX}|\nabla_L|MW^2_{\es}\|_{L^2 H^N}\\
		&+\|\partial_XMW^1_{\es}\|^2_{L^2H^N}\|\partial_{X}MW^1_{\en}\|_{L^\infty H^N}\|\partial_{XX}|\nabla_L|MW^2_{\es}\|_{L^2 H^N}\\
		\lesssim&\ \nu^{-1}\epsilon^3.
	\end{align*}
	Turning to $i=1,j=2$, it holds that
	\begin{align*}
		NLP(1,2)\lesssim&(\|\partial_{XY}^LMW^1_{\neq}\|_{L^2H^N}+\|\partial_{Y}^LMW^1_{0}\|_{L^2H^N})\|\partial_{XX}MW^2_{\es}\|_{L^2H^N}\|\partial_{X}|\nabla_L|MW^2_{\es}\|_{L^\infty H^N}\\
		&+(\|\partial_{XY}^LMW^1_{\es}\|_{L^2H^N}+\|\partial_{Y}^LMW^1_{0}\|_{L^2H^N})\|\partial_{XX}MW^2_{\en}\|_{L^2H^N}\|\partial_{X}|\nabla_L|MW^2_{\es}\|_{L^\infty H^N}\\
		\lesssim&\ \nu^{-1}\epsilon^3.
	\end{align*}
	When $i=1,j=3$, noting $N>9/2$, we have
	\begin{align*}
		NLP(1,3,\es,\en)=& NLP^{HL}(1,3,\es,\en)+NLP^{LH}(1,3,\es,\en)\\
		\lesssim&\|\partial_ZMW^1_{\es}\|_{L^2H^N}\|\partial_XMW^3_{\en}\|_{L^\infty H^{3/2+}}\|\partial_{XX}|\nabla_L|MW^2_{\es}\|_{L^2H^N}\\
		&+\|\partial_ZMW^1_{\es}\|_{L^\infty  H^{3/2+}}\|\partial_XMW^3_{\en}\|_{L^2H^{N}}\|\partial_{XX}|\nabla_L|MW^2_{\es}\|_{L^2H^N}\\
		\lesssim&\ \nu^{-1}\epsilon^3.
	\end{align*}
	The other contributions can be treated similarly and we omit the details. For $i=j=2$, it follows that
	\begin{align*}
		NLP(2,2)\lesssim&(\|\partial_{XY}^LMW^2_{\neq}\|_{L^2H^N}+\|\partial_{Y}MW^2_{0}\|_{L^2H^N})\|\partial_{XY}^LMW^2_{\es}\|_{L^2H^N}\|\partial_X|\nabla_L|MW^2_{\es}\|_{L^\infty H^N}\\
		&+(\|\partial_{XY}^LMW^2_{\es}\|_{L^2H^N}+\|\partial_{Y}MW^2_{0}\|_{L^2H^N})\|\partial_{XY}^LMW^2_{\en}\|_{L^2H^N}\|\partial_X|\nabla_L|MW^2_{\es}\|_{L^\infty H^N}\\
		\lesssim&\ \nu^{-1}\epsilon^3.
	\end{align*}
	The treatment of $i=2,j=3$ is almost the same as $NLP(1,2)$, except $NLP(2,3,0,\neq)$. In fact, we have
	\begin{align*}
		NLP(2,3,0,\neq)\lesssim&\|\partial_{Z}MW^2_{0}\|_{L^\infty H^N}\|\partial_{XY}^LMW^3_{\neq}\|_{L^2H^N}\|\partial_X|\nabla_L|MW^2_{\es}\|_{L^2 H^N}\lesssim\ \nu^{-1}\epsilon^3.
	\end{align*}
	For $i=j=3$, by symmetry, we only need to control the $HL$ interaction
	\begin{align*}
		NLP^{HL}(3,3)\lesssim \|\partial_ZMW^3\|_{L^2H^N}\|\partial_ZMW^3\|_{L^\infty H^{3/2+}}\|\partial_{XX}|\nabla_L|MW^2_{\es}\|_{L^2H^N}\lesssim \nu^{-1/2}\epsilon\epsilon\nu^{-1/2}\epsilon=\nu^{-1}\epsilon^3,
	\end{align*}
	concluding this subsection.
	\subsection{Estimate of $F^{2}_{\es}$}
	In this subsection, we improve \eqref{bsf2}. An energy estimate gives
	\begin{align*}
		&\frac{1}{2}\|MF^{+,2}_{\es}(\et)\|^2_{H^N}+\nu\|\nabla_LMF^{+,2}_{\es}\|^2_{L^2H^N}+\sum_{i=1}^{2}\left\|\sqrt{-\frac{\partial_t M_i}{M_i}}MF^{+,2}_{\es}\right\|^2_{L^2H^N}\\
		&=\frac{1}{2}\|MF^{+,2}_{\es}(t_0)\|^2_{H^N}+\delta_0\nu^{1/3}\|MF^{+,2}_{\es}\|_{L^2H^N}^2+\int_{t_0}^{\et}\left\langle\partial^L_{XY}(T^{-t}_{2\alpha}MW^{-,2}_{\es}-MW^{+,2}_{\es}),MF^{+,2}_{\es}\right\rangle_{H^N} dt\\
		&\quad-\int_{t_0}^{\et}\left\langle M((T^{-t}_{2\alpha}W^{-})\cdot\nabla_LF^{+,2})_{\es},MF^{+,2}_{\es}\right\rangle_{H^N} dt-\int_{t_0}^{\et}\left\langle M((T^{-t}_{2\alpha}F^{-})\cdot\nabla_LW^{+,2})_{\es},MF^{+,2}_{\es}\right\rangle_{H^N} dt\\
		&\quad-2\int_{t_0}^{\et}\left\langle M(\partial^L_i(T^{-t}_{2\alpha}W^{-,j})\partial^L_{ij}W^{+,2})_{\es},MF^{+,2}_{\es}\right\rangle_{H^N} dt\\
		&\quad+\int_{t_0}^{\et}\left\langle \partial_Y^LM(\partial^L_j(T^{-t}_{2\alpha}W^{-,i})\partial^L_iW^{+,j})_{\es},MF^{+,2}_{\es}\right\rangle_{H^N} dt\\
		&=:\frac{1}{2}\|MF^{+,2}_{\es}(t_0)\|_{H^N}^2+L_{\lambda}+LS+NLT+NLS_1+NLS_2+NLP.
	\end{align*}
	The estimate of $L_\lambda$ is exactly the same as \eqref{estlambda}, and we omit it.
	For $LS$, using bootstrap assumption \eqref{bss} and $C_0>32$, it holds that 
	\begin{equation*}
		LS\leq\|\partial_X|\nabla_L|MW^2_{\es}\|_{L^2H^N}\|MF^2_{\es}\|_{L^2H^N}\leq64 \nu^{-1/6}\epsilon C_0\nu^{-1/2}\epsilon\leq 2(C_0\nu^{-1/3}\epsilon)^2.
	\end{equation*}
	Turning to $NLT(j)$, when $j=1$, we have
	\begin{align*}
		NLT(1)=&\int_{t_0}^{\et}\left\langle M(W^{1}\partial_XF^{2})_{\es},MF^{2}_{\es}\right\rangle_{H^N} dt\\
		\lesssim& \|MW^1\|_{L^\infty H^N}\|\partial_XMF^2_{\es}\|_{L^2H^N}\|MF^2_{\es}\|_{L^2H^N}\\
		&+(\|MW^1_{\es}\|_{L^\infty H^N}+\|MW^1_{0}\|_{L^\infty H^N})\|\partial_XMF^2_{\en}\|_{L^2H^N}\|MF^2_{\es}\|_{L^2H^N}\\
		\lesssim&\ \nu^{-1/3}\epsilon\nu^{-5/6}\epsilon\nu^{-1/2}\epsilon+\epsilon\nu^{-7/6}\epsilon\nu^{-1/2}\epsilon=\nu^{-1}\epsilon(\nu^{-1/3}\epsilon)^2.
	\end{align*}
	The estimate of $j=2,3$ is similar to the above, and we omit further details. For $NLS_1$, using divergence-free condition, we obtain 
	\begin{align*}
		NLS_1(1,\en,\es)=&\int_{t_0}^{\et}\langle M((-\partial_Y^L\partial_X^{-1}F^2_{\en}+\sigma F^3_{\en})\partial_XW^2_{\es}),MF^2_{\es}\rangle_{H^N}dt\\
		\lesssim&(\|\partial_Y^LMF^2_{\en}\|_{L^2H^N}+\|MF^3_{\en}\|_{L^2 H^N} )\|\partial_XMW^2_{\es}\|_{L^2H^N}\|MF^2_{\es}\|_{L^\infty H^N}\\
		\lesssim&\ \nu^{-7/6}\epsilon\epsilon\nu^{-1/3}\epsilon=\nu^{-5/6}\epsilon(\nu^{-1/3}\epsilon)^2,
	\end{align*}
	and 
	\begin{align*}
		NLS_1(1,\es,\en)=&NLS_1^{HL}(1,\es,\en)+NLS_1^{LH}(1,\es,\en)\\
		\lesssim&\|\partial_Y^LMF^2_{\es}\|_{L^2H^N}\|\partial_XMW^2_{\en}\|_{L^2H^N}\|MF^2_{\es}\|_{L^\infty H^N}\\
		&+\|\partial_ZMF^3_{\es}\|_{L^2H^{3/2+}}\|\partial_XMW^2_{\en}\|_{L^2H^N}\|MF^2_{\es}\|_{L^\infty H^N}\\
		&+\|\partial_ZMF^3_{\es}\|_{L^2H^{N}}\|\partial_XMW^2_{\en}\|_{L^2H^{3/2+}}\|MF^2_{\es}\|_{L^\infty H^N}\\
		\lesssim&\ 2\nu^{-5/6}\epsilon\nu^{-1/3}\epsilon\nu^{-1/3}\epsilon+\nu^{-7/6}\epsilon\nu^{-1/6}\epsilon\nu^{-1/3}\epsilon\lesssim\nu^{-1}\epsilon(\nu^{-1/3}\epsilon)^2.
	\end{align*}
	The other contributions of $NLS_1$, $NLS_2$, and $NLP$ can be estimated analogously and we omit further details. This completes the improvement of \eqref{bsf2}.
	\subsection{Estimate of $(\partial_X,\partial_Z)^2W^{3}_{\es}$}
	In this subsection, we improve \eqref{bsz3}. We only consider $\partial_{ZZ}W^3_{\es}$, and the estimate of $(\partial_X,\partial_Z)\partial_XW^3_{\es}$ is exactly the same. An energy estimate gives
	\begin{align*}
		&\frac{1}{2}\|\partial_{ZZ}MW^{+,3}_{\es}(\et)\|^2_{H^N}+\nu\|\nabla_L\partial_{ZZ}MW^{+,3}_{\es}\|^2_{L^2H^N}+\sum_{i=1}^{2}\left\|\sqrt{-\frac{\partial_t M_i}{M_i}}\partial_{ZZ}MW^{+,3}_{\es}\right\|^2_{L^2H^N}\\
		&=\frac{1}{2}\|\partial_{ZZ}MW^{+,3}_{\es}(t_0)\|^2_{H^N}+\delta\nu^{1/3}\int_{t_0}^{\et}\|\partial_{ZZ}MW^{+,3}_{\es}\|^2_{H^N}dt\\
		&\quad+\int_{t_0}^{\et}\left\langle \frac{\partial_{XZ}}{\Delta_L}\partial_{ZZ}(W^{-,2}_{\es}+T^{-t}_{-2\alpha }W ^{+,2}_{\es}),\partial_{ZZ}MW^{+,3}_{\es}\right\rangle_{H^N} dt\\
		&\quad-\int_{t_0}^{\et}\left\langle \partial_{ZZ}M((T^{-t}_{+2\alpha}W^{-})\cdot\nabla_LW^{+,3})_{\es},\partial_{ZZ}MW^{+,3}_{\es}\right\rangle_{H^N} dt\\
		&\quad+\int_{t_0}^{\et}\left\langle \frac{\partial_{ZZZ}}{\Delta_L}M(\partial^L_j(T^{-t}_{+2\alpha }W^{-,i})\partial^L_iW^{+,j})_{\es},\partial_{ZZ}MW^{+,3}_{\es}\right\rangle_{H^N} dt\\
		&=:\frac{1}{2}\|\partial_{ZZ}|\nabla_L|MW^{+,2}_{\es}(t_0)\|^2_{H^N}+L_{\lambda}+LP+NLT+NLP.
	\end{align*}
	For $LP$, it follows that 
	\begin{equation*}
		LP\leq\left\|\sqrt{-\frac{\partial_t M_1}{M_1}}\partial_{ZZ}MW^{2}_{\es}\right\|_{L^2H^N}\left\|\sqrt{-\frac{\partial_t M_1}{M_1}}\partial_{ZZ}MW^{3}_{\es}\right\|_{L^2H^N}\leq\frac{1}{C_0}(C_0\epsilon)^2.
	\end{equation*}
	Turing to nonlinear terms, we have
	\begin{align*}
		NLT^{LH}(1,\en,\es)=&\int_{t_0}^{\et}\left\langle \partial_{ZZ}M(W^{1,Lo}_{\en}\partial_XW^{3,Hi}_{\es}),\partial_{ZZ}MW^{3}_{\es}\right\rangle_{H^N} dt\\
		\lesssim&\|\partial_{XX}MW^1_{\en}\|_{L^\infty H^{3/2+}}\|\partial_{XZZ}MW^3_{\es}\|_{L^2H^N}\|\partial_{ZZ}MW^3_{\es}\|_{L^2H^N}\\
		\lesssim&\ \nu^{-1/3}\epsilon\nu^{-1/2}\epsilon\nu^{-1/6}\epsilon=\nu^{-1}\epsilon^3,
	\end{align*}
	and
	\begin{align*}
		NLT^{LH}(\cdot,\cdot,\en)=&\int_{t_0}^{\et}\left\langle \partial_{ZZ}M(W^{Lo}\cdot\nabla_LW^{3,Hi}_{\en})_{\es},\partial_{ZZ}MW^{3}_{\es}\right\rangle_{H^N} dt\\
		\lesssim&(\|MW_{\es}\|_{L^\infty H^N}+\|MW_{0}\|_{L^\infty H^N})\|\nabla_L\partial_{XX}MW^3_{\en}\|_{L^2H^N}\|\partial_{ZZ}MW^3_{\es}\|_{L^2H^N}\\
		\lesssim&\ \epsilon\nu^{-5/6}\epsilon\nu^{-1/6}\epsilon=\nu^{-1}\epsilon^3.
	\end{align*}
	The treatments of the other contributions of $NLT$ and $NLP$ are similar to the improvement of \eqref{bssymv2}, and we omit the details. This completes the improvement of \eqref{bsz3}.
	\subsection{Estimate of $F^{3}_{\es}$}
	In this subsection, we improve \eqref{bsf3}. An energy estimate gives
	\begin{align*}
		&\frac{1}{2}\|MF^{+,3}_{\es}(\et)\|^2_{H^N}+\nu\|\nabla_LMF^{+,3}_{\es}\|^2_{L^2H^N}+\sum_{i=1}^{2}\left\|\sqrt{-\frac{\partial_t M_i}{M_i}}MF^{+,3}_{\es}\right\|^2_{L^2H^N}\\
		&=\frac{1}{2}\|MF^{+,3}_{\es}(t_0)\|^2_{H^N}+\delta_0\nu^{1/3}\|MF^{+,3}_{\es}\|_{L^2H^N}^2-\int_{t_0}^{\et}\left\langle2\partial^L_{XY}MW^{+,3}_{\es},MF^{+,3}_{\es}\right\rangle_{H^N} dt\\
		&\quad+\int_{t_0}^{\et}\left\langle\frac{\partial_{XZ}}{\Delta_L}(F^{+,2}_{\es}+T^{-t}_{2\alpha}MF^{-,2}_{\es}),MF^{+,3}_{\es}\right\rangle_{H^N} dt-\int_{t_0}^{\et}\left\langle M((T^{-t}_{2\alpha}W^{-})\cdot\nabla_LF^{+,3})_{\es},MF^{+,3}_{\es}\right\rangle_{H^N} dt\\
		&\quad-\int_{t_0}^{\et}\left\langle M((T^{-t}_{2\alpha}F^{-})\cdot\nabla_LW^{+,3})_{\es},MF^{+,3}_{\es}\right\rangle_{H^N} dt-2\int_{t_0}^{\et}\left\langle M(\partial^L_i(T^{-t}_{2\alpha}W^{-,j})\partial^L_{ij}W^{+,3})_{\es},MF^{+,2}_{\es}\right\rangle_{H^N} dt\\
		&\quad+\int_{t_0}^{\et}\left\langle \partial_ZM(\partial^L_j(T^{-t}_{2\alpha}W^{-,i})\partial^L_iW^{+,j})_{\es},MF^{+,3}_{\es}\right\rangle_{H^N} dt\\
		&=:\frac{1}{2}\|MF^{+,3}_{\es}(t_0)\|_{H^N}^2+L_{\lambda}+LS+LP+NLT+NLS_1+NLS_2+NLP.	
	\end{align*}
	For $LS$, by bootstrap assumption \eqref{bsz3}, we have
	\begin{equation*}
		LS\leq2\|\nabla_L\partial_XMW^3_{\es}\|_{L^2H^N}\|MF^3_{\es}\|_{L^2H^N}\leq 2C_0\nu^{-1/2}\epsilon C_0^2\nu^{-5/6}\epsilon=\frac{2}{C_0}(C_0^2\nu^{-2/3}\epsilon)^2.
	\end{equation*}
	Turning to $LP$, it holds that 
	\begin{equation*}
		LP\leq\left\|\sqrt{-\frac{\partial_t M_1}{M_1}}MF^{2}_{\es}\right\|_{L^2H^N}\left\|\sqrt{-\frac{\partial_t M_1}{M_1}}MF^{3}_{\es}\right\|_{L^2H^N}\leq\frac{1}{C_0}(C_0^2\nu^{-2/3}\epsilon)^2.
	\end{equation*}
	Regarding the nonlinear terms, for $NLT(\cdot,\cdot,\en)$, we obtain
	\begin{align*}
		NLT(\cdot,\cdot,\en)=&\int_{t_0}^{\et}\langle M(W\cdot\nabla_LF^3_{\en}),MF^3_{\es}\rangle_{H^N}dt\\
		\lesssim&(\|MW_{\es}\|_{L^\infty H^N}+\|MW_0\|_{L^\infty H^N})\|\nabla_LMF^3_{\en}\|_{L^2 H^N}\|MF^3_{\es}\|_{L^2 H^N}\\
		\lesssim&\ \nu^{-1}\epsilon(\nu^{-2/3}\epsilon)^2.
	\end{align*}
	Considering $NLT(1,\en,\es)$, by Lemma \ref{est1}, it holds that
	\begin{align*}
		NLT(1,\en,\es)=&\int_{t_0}^{\et}\langle M(W^1_{\en}\partial_XF^3_{\es}),MF^3_{\es}\rangle_{H^N}dt\\
		\lesssim&\||\nabla_L|MW^1_{\en}\|_{L^2 H^N}\||\nabla_L|\partial_XMW^3_{\es}\|_{L^2H^N}\|MF^3_{\es}\|_{L^\infty H^N}\\
		&+\|MW^1_{\en}\|_{L^\infty H^N}\||\nabla_L|\partial_XMW^3_{\es}\|_{L^2H^N}\||\nabla_L|MF^3_{\es}\|_{L^2 H^N}\\
		\lesssim&\ \nu^{-2/3}\epsilon(\nu^{-2/3}\epsilon)^2.
	\end{align*}
	The estimates of the other contribution of $NLT$, $NLS_1$, $NLS_2$, and $NLP$ are simliar to the above and we omit the details. This completes the improvement of \eqref{bsf3}.
	\section{Energy estimates on \nm}
	In this section, we improve the bootstrap assumptions \eqref{bsnu} and \eqref{bsnb}. We focus on the nonlinear interaction between two homogeneous modes and the energy estimate on low regularity of $Q^2_{\en}$ \eqref{bsq2l}. In fact, the interactions between two non-homogeneous modes, or zero-mode and nonzero modes are similarly treated as Section \ref{secs}. 
	\subsection{Estimate of $Q^2_{\en}$ in $H^N$}
	In this subsection, we improve \eqref{bsq2}. Recalling the system of $Q^2_{\en}$ \eqref{eqqh}, an energy estimate gives
	\begin{align*}
		&\frac{1}{2}\|MQ^2_{\en}(\et)\|^2_{H^N}+\nu\|\nabla_LMQ^2_{\en}\|_{L^2H^N}^2+\sum_{i=1}^{2}\left\|\sqrt{-\frac{\partial_t M_i}{M_i}}MQ^{2}_{\en}\right\|^2_{L^2H^N}\\
		&=\frac{1}{2}\|MQ^{2}_{\en}(t_0)\|^2_{H^N}+\delta_0\nu^{1/3}\|MQ^2_{\en}\|_{L^2H^N}^2-\int_{t_0}^{\et}\langle M(U\cdot \nabla_LQ^2)_{\en},MQ^2_{\en}\rangle_{H^N}dt\\
		&\quad-\int_{t_0}^{\et}\langle M(Q\cdot \nabla_LU^2)_{\en},MQ^2_{\en}\rangle_{H^N}dt-2\int_{t_0}^{\et}\langle M(\partial_i^LU^j\partial_{ij}^LU^2)_{\en},MQ^2_{\en}\rangle_{H^N}dt\\
		&\quad+\int_{t_0}^{\et}\langle \Delta_LM(B\cdot \nabla_LB^2)_{\en},MQ^2_{\en}\rangle_{H^N}dt+\int_{t_0}^{\et}\langle \partial_Y^LM(\partial^L_jU^i\partial_i^LU^j-\partial^L_jB^i\partial^L_iB^j)_{\en},MQ^2_{\en}\rangle_{H^N}dt\\
		&=:\frac{1}{2}\|MQ^{2}_{\es}(t_0)\|^2_{H^N}+L_{\lambda}+NLT+NLS_1+NLS_2+NLF+NLP.
	\end{align*} 
	The estimates of $NLT,NLS_1$, and $NLS_2$ are standard, and we omit the details. For $NLF$, by integration by parts, it follows that
	\begin{align*}
		NLF(\cdot,\en,\en)=&-\int_{t_0}^{\et}\left\langle |\nabla_L|M(B_{\en}\cdot \nabla_LB^2_{\en}),|\nabla_L|MQ^2_{\en}\right\rangle_{H^N}dt\\
		\lesssim&\||\nabla_L|(MB^1_{\en},MB^3_{\en})\|_{L^2H^N}\|\partial_XMB^2_{\en}\|_{L^\infty H^N}\||\nabla_L|MQ^2_{\en}\|_{L^2H^N}\\
		&+\|(MB^1_{\en},MB^3_{\en})\|_{L^\infty H^N}\||\nabla_L|\partial_XMB^2_{\en}\|_{L^2H^N}\||\nabla_L|MQ^2_{\en}\|_{L^2H^N}\\
		&+\||\nabla_L|MB^2_{\en}\|_{L^2H^N}\|\partial_Y^LMB^2_{\en}\|_{L^\infty H^N}\||\nabla_L|MQ^2_{\en}\|_{L^2H^N}\\
		&+\|MB^2_{\en}\|_{L^\infty H^N}\||\nabla_L|\partial_Y^LMB^2_{\en}\|_{L^2H^N}\||\nabla_L|MQ^2_{\en}\|_{L^2H^N}\\
		\lesssim&\ (\nu^{-5/6}\epsilon\epsilon+\nu^{-1/3}\epsilon\nu^{-1/2}\epsilon+\nu^{-1/2}\epsilon\nu^{-1/3}\epsilon+\epsilon\nu^{-5/6}\epsilon)\nu^{-5/6}\epsilon\lesssim\nu^{-1}\epsilon(\nu^{-1/3}\epsilon)^2.
	\end{align*}
	The treatment of $NLP$ is exactly the same. Hence we omit further details to conclude the improvement of \eqref{bsq2}.
	\subsection{Estimate of $Q^2_{\en}$ in $H^{N-1}$}
	Here we consider \eqref{bsq2l} which implies the inviscid damping of $U^2_{\en}$. An energy estimate gives
	\begin{align*}
		&\frac{1}{2}\|\langle \et\rangle^{-1}MQ^2_{\en}(\et)\|^2_{H^{N-1}}+\nu\left\|\langle t\rangle^{-1}\nabla_LMQ^2_{\en}\right\|_{L^2H^{N-1}}^2+\sum_{i=1}^{2}\left\|\sqrt{-\frac{\partial_t M_i}{M_i}}\langle t\rangle^{-1}MQ^{2}_{\es}\right\|^2_{L^2H^{N-1}}\\
		&=\frac{1}{2}\|\langle t_0\rangle^{-1}MQ^{2}_{\en}(t_0)\|^2_{H^{N-1}}-\left\|\sqrt{t}\langle t\rangle^{-2} MQ^2_{\en}\right\|^2_{L^2H^{N-1}}+\delta_0\nu^{1/3}\|\langle t\rangle^{-1}MQ^2_{\en}\|_{L^2H^{N-1}}^2
		\\
		&\quad-\int_{t_0}^{\et}\left\langle \langle t\rangle^{-1}M(Q\cdot \nabla_LU^2)_{\en},\langle t\rangle^{-1}MQ^2_{\en}\right\rangle_{H^{N-1}}dt-2\int_{t_0}^{\et}\left\langle \langle t\rangle^{-1}M(\partial_i^LU^j\partial_{ij}^LU^2)_{\en},\langle t\rangle^{-1}MQ^2_{\en}\right\rangle_{H^{N-1}}dt\\
		&\quad-\int_{t_0}^{\et}\left\langle \langle t\rangle^{-1}M(U\cdot \nabla_LQ^2)_{\en},\langle t\rangle^{-1}MQ^2_{\en}\right\rangle_{H^{N-1}}dt+\int_{t_0}^{\et}\left\langle \langle t\rangle^{-1}\Delta_LM(B\cdot \nabla_LB^2)_{\en},\langle t\rangle^{-1}MQ^2_{\en}\right\rangle_{H^{N-1}}dt\\
		&\quad+\int_{t_0}^{\et}\left\langle \langle t\rangle^{-1}\partial_Y^LM(\partial^L_jU^i\partial_i^LU^j-\partial^L_jB^i\partial^L_iB^j)_{\en},\langle t\rangle^{-1}MQ^2_{\en}\right\rangle_{H^{N-1}}dt\\
		&=:\frac{1}{2}\langle t\rangle^{-2}\|MQ^{2}_{\en}(t_0)\|^2_{H^{N-1}}-\left\|\sqrt{t}\langle t\rangle^{-2} MQ^2_{\en}\right\|^2_{L^2H^{N-1}}+L_{\lambda}+NLS_1+NLS_2+NLT+NLF+NLP.
	\end{align*}
	For $NLS_1$, we start with the interaction between two non-homogeneous modes. Noting that
	\begin{align*}
		&\|Q^1_{\es}\|_{L^\infty H^{N-1}}\lesssim\nu^{-1/3}\|e^{\delta_0\nu^{1/3}t}Q^2_{\es}\|_{L^\infty H^N}+\|Q^3_{\es}\|_{L^\infty H^N}\lesssim\nu^{-2/3}\epsilon,\\
		&\|\langle t\rangle^{-1}\partial^L_YMU^2_{\es}\|_{L^2H^{N-1}}\lesssim\|MU^2_{\es}\|_{L^2H^N}\lesssim\epsilon,
	\end{align*}
	we have
	\begin{align*}
		NLS_1(\es,\es)=&\int_{t_0}^{\et}\left\langle \langle t\rangle^{-1}M(Q_{\es}\cdot \nabla_LU^2_{\es}),\langle t\rangle^{-1}MQ^2_{\en}\right\rangle_{H^{N-1}}dt\\
		\lesssim&\|Q^1_{\es}\|_{L^\infty H^{N-1}}\|\langle t\rangle^{-1}\|_{L^2_t}\|MU^2_{\es}\|_{L^2 H^N}\|\langle t\rangle^{-1}MQ^2_{\en}\|_{L^\infty H^{N-1}}\\
		&+\|MQ^2_{\es}\|_{L^2H^N}\|\langle t\rangle^{-1}\partial_Y^LMU^2_{\es}\|_{L^2H^N}\|\langle t\rangle^{-1}MQ^2_{\en}\|_{L^\infty H^{N-1}}\\
		&+\|MQ^3_{\es}\|_{L^\infty H^{N-1}}\|\langle t\rangle^{-1}\|_{L^2_t}\|MU^2_{\es}\|_{L^2 H^N}\|\langle t\rangle^{-1}MQ^2_{\en}\|_{L^\infty H^{N-1}}\\
		\lesssim&\ \nu^{-2/3}\epsilon^3+\nu^{-1/2}\epsilon^3\lesssim\nu^{-1}\epsilon^3.
	\end{align*}
	Turning next to the interaction between two homogeneous modes
	\begin{equation*}
		NLS_1(\en,\en)=\int_{t_0}^{\et}\left\langle \langle t\rangle^{-1}M(Q_{\en}\cdot \nabla_LU^{2}_{\en}),\langle t\rangle^{-1}MQ^2_{\en}\right\rangle_{H^{N-1}}dt.
	\end{equation*}
	By divergence-free condition and \eqref{estu2l}, we obtain
	\begin{align*}
		NLS_1(\en,\en)=&NLS_1^{HL}(\en,\en)+	NLS_1^{LH}(\en,\en)\\
		\lesssim&\|\partial_Y^L\langle t\rangle^{-1}MQ^2_{\en}\|_{L^2H^{N-1}}\|MU^2_{\en}\|_{L^2H^{N}}\|\langle t\rangle^{-1}MQ^2_{\en}\|_{L^\infty H^{N-1}}\\
		&+\|MQ^3_{\en}\|_{L^\infty H^{N-1}}\|\partial_XMU^2_{\en}\|_{L^2H^{3/2+}}\|\langle t\rangle^{-1}\|_{L^2_t}\|\langle t\rangle^{-1}MQ^2_{\en}\|_{L^\infty H^{N-1}}\\
		&+\|MQ^3_{\en}\|_{L^\infty H^{3/2+}}\|MU^2_{\en}\|_{L^2H^N}\|\langle t\rangle^{-1}\|_{L^2_t}\|\langle t\rangle^{-1}MQ^2_{\en}\|_{L^\infty H^{N-1}}\\
		&+\|\langle t\rangle^{-1}MQ^2_{\en}\|_{L^2H^{N-1}}\|\partial_{Y}^LMU^2_{\en}\|_{L^2H^{N-1}}\|\langle t\rangle^{-1}MQ^2_{\en}\|_{L^\infty H^{N-1}}\\
		\lesssim& \ \nu^{-1/2}\epsilon\nu^{-1/3}\epsilon^2+\nu^{-1}\epsilon^3+\nu^{-2/3}\epsilon\nu^{-1/3}\epsilon^2+\nu^{-1/6}\epsilon\nu^{-1/3}\epsilon^2\lesssim\nu^{-1}\epsilon^3.
	\end{align*}
	Finally, for the interaction between zero-mode and nonzero modes, noting $\mathbb{P}_{l=0}U^2_0=0$, we have
	\begin{align*}
		NLS_1(\neq ,0)+NLS_1(0,\neq)\lesssim&(\|MQ^2_{\es}\|_{L^2 H^N}+\|MQ^3_{\es}\|_{L^2 H^N})\|\mathbb{P}_{l\neq 0}MU^2_{0}\|_{L^\infty H^N}\|\langle t\rangle^{-1}MQ^2_{\en}\|_{L^2H^{N-1}}\\
		&+\|(MQ^1_0,MQ^3_0)\|_{L^\infty H^N}\|MU^2_{\neq }\|_{L^2H^N}\|\langle t\rangle^{-1}\|_{L^2_t}\|\langle t\rangle^{-1}MQ^2_{\en}\|_{L^\infty H^{N-1}}\\
		&+\|\mathbb{P}_{l\neq 0}MQ^2_{0}\|_{L^\infty H^N}\|\partial^L_YMU^2_{\es}\|_{L^2H^N}\|\langle t\rangle^{-1}MQ^2_{\en}\|_{L^2 H^{N-1}}\\
		\lesssim&\ \nu^{-5/6}\epsilon^2\nu^{-1/6}\epsilon+\nu^{-2/3}\epsilon\nu^{-1/3}\epsilon^2+\nu^{-1/3}\epsilon\nu^{-1/6}\epsilon^2\lesssim\nu^{-1}\epsilon^3,
	\end{align*}
	completing the estimate of $NLS_1$. The treatments of $NLS_2$ and $NLT$ are similar to the above term, and we omit further details. Turning to $NLF$, using integration by parts, it holds that
	\begin{align*}
		NLF=&-\int_{t_0}^{\et}\left\langle \langle t\rangle^{-1}|\nabla_L|M(B\cdot \nabla_LB^2)_{\en},|\nabla_L|\langle t\rangle^{-1}MQ^2_{\en}\right\rangle_{H^{N-1}}dt\\
		\lesssim&\|MB^1\|_{L^\infty H^N}\|\partial_XMB^2_{\neq}\|_{L^2H^N}\|\nabla_L\langle t\rangle^{-1}MQ^2_{\en}\|_{L^2H^{N-1}}\\
		&+\|MB^2\|_{L^\infty H^N}\|\partial_Y^LMB^2\|_{L^2H^N}\|\nabla_L\langle t\rangle^{-1}MQ^2_{\en}\|_{L^2H^{N-1}}\\
		&+(\|MB^3_{\es}\|_{L^\infty H^N}+\|MB^3_{0}\|_{L^\infty H^N})\|\partial_ZMB^2\|_{L^2 H^N}\|\nabla_L\langle t\rangle^{-1}MQ^2_{\en}\|_{L^2H^{N-1}}\\
		&+\|MB^3_{\en}\|_{L^\infty H^N}\|\partial_XMB^2_{\en}\|_{L^2 H^N}\|\nabla_L\langle t\rangle^{-1}MQ^2_{\en}\|_{L^2H^{N-1}}\\
		\lesssim&\ \nu^{-1/3}\epsilon\nu^{-1/6}\epsilon\nu^{-1/2}\epsilon+2\epsilon\nu^{-1/2}\epsilon\nu^{-1/2}\epsilon+\nu^{-1/3}\epsilon\nu^{-1/6}\epsilon\nu^{-1/2}\epsilon\lesssim\nu^{-1}\epsilon^3.
	\end{align*}
	The $NLP$ term can be treated similarly and is hence omitted for the sake of brevity. This completes the improvement of \eqref{bsq2l}.
	\subsection{Estimates of $\partial_{XX}U^3_{\en}$ and $Q^3_{\en}$} 
	In this subsection we improve \eqref{bsu3} and \eqref{bsq3}. For $\partial_{XX}U^3_{\en}$ and $N'\in\{N,N-2\}$, an energy estimate gives
	\begin{align*}
		&\frac{1}{2}\|\partial_{XX}MU^3_{\en}(\et)\|^2_{H^{N'}}+\nu\|\nabla_L\partial_{XX}MU^3_{\en}\|_{L^2H^{N'}}^2+\sum_{i=1}^{2}\left\|\sqrt{-\frac{\partial_t M_i}{M_i}}\partial_{XX}MU^{3}_{\en}\right\|^2_{L^2H^{N'}}\\
		&=\frac{1}{2}\|\partial_{XX}MU^3_{\en}(t_0)\|^2_{H^{N'}}+\delta_0\nu^{1/3}\|\partial_{XX}MU^3_{\en}\|_{L^2H^{N'}}^2-\sigma\int_{t_0}^{\et}\left\langle \partial_{XX}\Delta_L^{-1}\partial_{XX}MU^2_{\en},\partial_{XX}MU^3_{\en}\right\rangle_{H^{N'}}dt\\
		&\quad-\int_{t_0}^{\et}\left\langle \partial_{XX}M(U\cdot\nabla_LU^3)_{\en},\partial_{XX}MU^3_{\en}\right\rangle_{H^{N'}}dt+\int_{t_0}^{\et}\left\langle \partial_{XX}M(B\cdot\nabla_LB^3)_{\en},\partial_{XX}MU^3_{\en}\right\rangle_{H^{N'}}dt\\
		&\quad-\sigma\int_{t_0}^{\et}\left\langle \partial_{XXX}\Delta_L^{-1}M(\partial^L_jU^i\partial^L_iU^j-\partial^L_jB^i\partial^L_iB^j)_{\en},\partial_{XX}MU^3_{\en}\right\rangle_{H^{N'}}dt\\
		&=:\frac{1}{2}\|\partial_{XX}MU^3_{\en}(t_0)\|^2_{H^{N'}}+\delta_0\nu^{1/3}\|\partial_{XX}MU^3_{\en}\|_{L^2H^{N'}}^2+L_\lambda+LP+NLT+NLF+NLP.
	\end{align*}
	When $N'=N$, \eqref{estu2} yields
	\begin{align*}
		LP\leq\sigma\|\partial_X|\nabla_L|^{-1}\partial_{XX}MU^2_{\en}\|_{L^2H^N}\|\partial_X|\nabla_L|^{-1}\partial_{XX}MU^3_{\en}\|_{L^2H^N}\leq\frac{\sigma}{C_0}(C_0\nu^{-1/3}\epsilon)^2.
	\end{align*}
	The treatment of nonlinear terms are similar to that of \eqref{bsz3}. For instance, the $NLF(\cdot,\en,\en)$ term follows
	\begin{align*}
		NLF(\cdot,\en,\en)\lesssim&\int_{t_0}^{\et}\left\langle \partial_{XX}M(B^{Hi}_{\en}\cdot\nabla_LB^{3,Lo}_{\en}),\partial_{XX}MU^3_{\en}\right\rangle_{H^{N}}dt\\
		&+\int_{t_0}^{\et}\left\langle \partial_{XX}M(B^{Lo}_{\en}\cdot\nabla_LB^{3,Hi}_{\en}),\partial_{XX}MU^3_{\en}\right\rangle_{H^{N}}dt\\
		\lesssim&\|\partial_{XX}(MB^1_{\en},MB^3_{\en})\|_{L^2H^N}
		\|\partial_XMB^3_{\en}\|_{L^2H^{3/2+}}\|\partial_{XX}MU^3_{\en}\|_{L^\infty H^N}\\
		&+\|\partial_{XX}MB^2_{\en}\|_{L^2H^N}
		\|\partial_Y^LMB^3_{\en}\|_{L^2H^{3/2+}}\|\partial_{XX}MU^3_{\en}\|_{L^\infty H^N}\\
		&+\|MB_{\en}\|_{L^\infty H^{3/2+}}\|\nabla_L\partial_{XX}MU^3_{\en}\|_{L^2H^N}\|\partial_{XX}MU^3_{\en}\|_{L^2H^N}\\
		\lesssim&\ \nu^{-1/2}\epsilon\nu^{-1/6}\epsilon^2+\nu^{-1/3}\epsilon\nu^{-1/2}\epsilon^2+\nu^{-1/3}\epsilon\nu^{-5/6}\epsilon\nu^{-1/2}\epsilon\lesssim\nu^{-1}\epsilon(\nu^{-1/3}\epsilon)^2.
	\end{align*}
	When $N'=N-2$, using \eqref{estu2l}, we have
	\begin{align*}
		LP\leq\sigma\|\partial_X|\nabla_L|^{-1}\partial_{XX}MU^2_{\en}\|_{L^2H^{N-2}}\|\partial_X|\nabla_L|^{-1}\partial_{XX}MU^3_{\en}\|_{L^2H^{N-2}}\leq\frac{\sigma}{C_0}(C_0\epsilon)^2.
	\end{align*}
	For $NLF(\cdot,\en,\en)$, we have
	\begin{align*}
		NLF(\cdot,\en,\en)\lesssim&\|(MB^1_{\en},MB^3_{\en})\|_{L^\infty H^{N}}\|\partial_XMB^3_{\en}\|_{L^2H^{N}}\|\partial_XMU^3_{\en}\|_{L^2H^{N-2}}\\
		&+\|MB^2_{\en}\|_{L^\infty H^{N}}\|\partial^L_YMB^3_{\en}\|_{L^2H^{N}}\|\partial_XMU^3_{\en}\|_{L^2H^{N-2}}\\
		\lesssim&\ \nu^{-2/3}\epsilon^3.
	\end{align*}
	For the sake of brevity, we omit further details about the other estimates of nonlinear terms, and complete the improvement of \eqref{bsu3}. 
	
	Turning to $Q^3_{\en}$, for $N'\in\{N,N-2\}$, the energy estimate reads 
		\begin{align*}
			&\frac{1}{2}\|MQ^{3}_{\en}(\et)\|^2_{H^{N'}}+\nu\|\nabla_LMQ^{3}_{\en}\|^2_{L^2H^{N'}}+\sum_{i=1}^{2}\left\|\sqrt{-\frac{\partial_t M_i}{M_i}}MQ^{3}_{\en}\right\|^2_{L^2H^{N'}}\\
			&=\frac{1}{2}\|MQ^{3}_{\en}(t_0)\|^2_{H^{N'}}+\delta_0\nu^{1/3}\|MQ^{3}_{\en}\|_{L^2H^{N'}}^2-\int_{t_0}^{\et}\left\langle2\partial^L_{XY}MU^{3}_{\en},MQ^{3}_{\en}\right\rangle_{H^{N'}} dt\\
			&\quad+2\sigma\int_{t_0}^{\et}\left\langle\partial_{XX}\Delta^{-1}_LQ^{2}_{\en},MQ^{3}_{\en}\right\rangle_{H^{N'}} dt-\int_{t_0}^{\et}\left\langle \Delta_LM(U\cdot\nabla_LU^{3})_{\en},MQ^{3}_{\en}\right\rangle_{H^{N'}} dt\\
			&\quad+\int_{t_0}^{\et}\left\langle \Delta_LM(B\cdot\nabla_LB^{3})_{\en},MQ^{3}_{\en}\right\rangle_{H^{N'}} dt-\sigma\int_{t_0}^{\et}\left\langle \partial_X(\partial^L_jU^i\partial_i^LU^j-\partial^L_jB^i\partial^L_iB^j)_{\en},MQ^{3}_{\en}\right\rangle_{H^{N'}} dt.
		\end{align*}
		The main amplification arises from the linear stretching term. In fact, it holds that
		\begin{equation*}
			\int_{t_0}^{\et}\left\langle2\partial^L_{XY}MU^{3}_{\en},MQ^{3}_{\en}\right\rangle_{H^{N}} dt\leq2\|\partial_Y^L\partial_XMU^3_{\en}\|_{L^2H^N}\|MQ^3_{\en}\|_{L^2H^N}\leq\frac{2}{C_0}(C_0^2\nu^{-1}\epsilon)^2,
		\end{equation*}
		and 
		\begin{equation*}
			\int_{t_0}^{\et}\left\langle2\partial^L_{XY}MU^{3}_{\en},MQ^{3}_{\en}\right\rangle_{H^{N-2}} dt\leq2\|\partial_Y^L\partial_XMU^3_{\en}\|_{L^2H^{N-2}}\|MQ^3_{\en}\|_{L^2H^{N-2}}\leq\frac{2}{C_0}(C_0^2\nu^{-2/3}\epsilon)^2.
		\end{equation*}
		The estimates of nonlinear terms are treated similarly to \eqref{bsf3} and hence are omitted for brevity. This completes the improvement of \eqref{bsq3}. 
		\subsection{Estimates of $\nabla_LB^2_{\en}$ and $\nabla_L^2B^2_{\en}$}
		In this subsection, we improve \eqref{bsb2} and \eqref{bsh2}. 
		While the presence of the pressure term restricts our estimates to the quantity $Q^2_{\en}$, its absence enables us to deal with $\nabla_LB^2_{\en}$ and $\nabla_L^2B^2_{\en}$. We only show the treatment of $\partial_Y^LMB^2_{\en}$ which helps us to deduce the amplification of $B^1_{\en}$ and the treatment of $\partial_{XX}MB^2_{\en}$ which has an amplification of order $\nu^{-1/6}$. The other bootstrap arguments can be treated similarly to \eqref{bss}. For $\partial_Y^LMB^2_{\en}$, an energy estimate gives
		\begin{align*}
			&\frac{1}{2}\|\partial_Y^LMB^2_{\en}(\et)\|^2_{H^{N}}+\nu\|\nabla_L\partial_Y^LMB^2_{\en}\|_{L^2H^{N}}^2+\sum_{i=1}^{2}\left\|\sqrt{-\frac{\partial_t M_i}{M_i}}\partial_Y^LMB^{2}_{\en}\right\|^2_{L^2H^{N}}\\
			&=\frac{1}{2}\|\partial_Y^LMB^2_{\en}(t_0)\|^2_{H^{N}}+\delta_0\nu^{1/3}\|\partial_Y^LMB^2_{\en}\|_{L^2H^{N}}^2-\int_{t_0}^{\et}\left\langle \partial_{X}MB^2_{\en},\partial_Y^LMB^2_{\en}\right\rangle_{H^{N}}dt\\
			&\quad-\int_{t_0}^{\et}\left\langle \partial_Y^LM(U\cdot\nabla_LB^2)_{\en},\partial_Y^LMB^2_{\en}\right\rangle_{H^{N}}dt+\int_{t_0}^{\et}\left\langle \partial_Y^LM(B\cdot\nabla_LU^2)_{\en},\partial_Y^LMB^2_{\en}\right\rangle_{H^{N}}dt\\
			&=:\frac{1}{2}\|\partial_XMU^3_{\en}(t_0)\|^2_{H^{N'}}+\delta_0\nu^{1/3}\|\partial_XMU^3_{\en}\|_{L^2H^{N'}}^2+L_\lambda+LS+NLT+NLS.
		\end{align*}
		By the enhanced dissipation in \eqref{bsb2}, we obtain
		\begin{equation*}
			LS\leq\|\partial_XMB^2_{\en}\|_{L^2H^N}\|\partial_Y^LMB^2_{\en}\|_{L^2H^N}\leq\frac{1}{C_0}(C_0\nu^{-1/3}\epsilon)^2.
		\end{equation*}
	 	Turning to the nonlinear terms, using integration by parts, it holds that
		\begin{align*}
			NLT(\en,\en)+NLS(\en,\en)\lesssim&\|MU_{\en}\|_{L^\infty H^N}\||\nabla_L|MB^2_{\en}\|_{L^2H^N}\|\partial_Y^L\partial_Y^LMB^2_{\en}\|_{L^2H^N}\\
			&+\|MB_{\en}\|_{L^\infty H^N}\||\nabla_L|MU^2_{\en}\|_{L^2H^N}\|\partial_Y^L\partial_Y^LMB^2_{\en}\|_{L^2H^N}\\
			\lesssim&\ \nu^{-1/3}\epsilon\nu^{-1/2}\epsilon\nu^{-5/6}\epsilon+\nu^{-1/3}\epsilon\nu^{-1/3}\epsilon\nu^{-5/6}\epsilon\lesssim\nu^{-1}\epsilon(\nu^{-1/3}\epsilon)^2.
		\end{align*}
		This completes the estimate of $\partial_Y^LMB^2_{\en}$. For $\partial_{XX}MB^2_{\en}$, an energy estimate gives
		\begin{align*}
			&\frac{1}{2}\|\partial_{XX}MB^2_{\en}(\et)\|^2_{H^{N}}+\nu\|\nabla_L\partial_{XX}MB^2_{\en}\|_{L^2H^{N}}^2+\sum_{i=1}^{2}\left\|\sqrt{-\frac{\partial_t M_i}{M_i}}\partial_{XX}MB^{2}_{\en}\right\|^2_{L^2H^{N}}\\
			&=\frac{1}{2}\|\partial_{XX}MB^2_{\en}(t_0)\|^2_{H^{N}}+\delta_0\nu^{1/3}\|\partial_{XX}MB^2_{\en}\|_{L^2H^{N}}^2
			-\int_{t_0}^{\et}\left\langle \partial_{XX}M(U\cdot\nabla_LB^2)_{\en},\partial_{XX}MB^2_{\en}\right\rangle_{H^{N}}dt\\
			&\quad+\int_{t_0}^{\et}\left\langle \partial_{XX}M(B\cdot\nabla_LU^2)_{\en},\partial_{XX}MB^2_{\en}\right\rangle_{H^{N}}dt.
		\end{align*}
		Considering the nonlinear term $\partial_{XX}(B^{1,Lo}_{\en}\partial_XU^{2,Hi}_{\en})$, by \eqref{estu2} and integration by parts, we have
		\begin{align*}
			\int_{t_0}^{\et}\left\langle \partial_{XX}M(B^{1,Lo}_{\en}\partial_XU^{2,Hi}_{\en}),\partial_{XX}MB^2_{\en}\right\rangle_{H^{N}}dt\lesssim&\|MB^1_{\en}\|_{L^\infty H^N}\|\partial_{XX}U^2_{\en}\|_{L^2H^N}\|\partial_X\partial_{XX}MB^2_{\en}\|_{L^2H^N}\\
			\lesssim&\ \nu^{-1/3}\epsilon\nu^{-1/3}\epsilon\nu^{-2/3}\epsilon=\nu^{-1}\epsilon(\nu^{-1/6}\epsilon)^2.
		\end{align*}
		The other nonlinear terms can be treated similarly and are hence omitted. This completes the improvement of \eqref{bsb2}--\eqref{bsh2}.
	\subsection{Estimates of $\partial_{X}B^3_{\en},\partial_{XX}B^3_{\en}$, and $G^3_{\en}$}
	In this subsection, we improve \eqref{bsb3} and \eqref{bsh3}. Unlike $U^3_{\en}$, the amplification of $B^3_{\en}$ is caused by the nonlinear interaction. In fact, for $s\in\{1,2\}$, the energy estimate reads 
	\begin{align*}
		&\frac{1}{2}\|\partial_X^sMB^3_{\en}(\et)\|^2_{H^{N}}+\nu\|\nabla_L\partial_X^sMB^3_{\en}\|_{L^2H^{N}}^2+\sum_{i=1}^{2}\left\|\sqrt{-\frac{\partial_t M_i}{M_i}}\partial_X^sMB^{3}_{\en}\right\|^2_{L^2H^{N}}\\
		&=\frac{1}{2}\|\partial_X^sMB^3_{\en}(t_0)\|^2_{H^{N'}}+\delta_0\nu^{1/3}\|\partial_X^sMB^3_{\en}\|_{L^2H^{N'}}^2-\int_{t_0}^{\et}\left\langle \partial_X^sM(U\cdot\nabla_LB^3)_{\en},\partial_X^sMB^3_{\en}\right\rangle_{H^{N}}dt\\
		&\quad+\int_{t_0}^{\et}\left\langle \partial_X^sM(B\cdot\nabla_LU^3)_{\en},\partial_X^sMB^3_{\en}\right\rangle_{H^{N}}dt\\
		&=:\frac{1}{2}\|\partial_X^sMU^3_{\en}(t_0)\|^2_{H^{N}}+\delta_0\nu^{1/3}\|\partial_X^sMU^3_{\en}\|_{L^2H^{N}}^2+L_\lambda+NLT+NLS.
	\end{align*}
	Considering $NLS^{LH}(1,\en,\en)$, for $s=1$, we have 
	\begin{align*}
		NLS^{LH}(1,\en,\en)=&\int_{t_0}^{\et}\left\langle \partial_XM(B^{1,Lo}_{\en}\partial_XU^{3,Hi}_{\en}),\partial_XMB^3_{\en}\right\rangle_{H^{N}}dt\\
		\lesssim&\|MB^1_{\en}\|_{L^2H^N}\|\partial_{XX}MU^3_{\en}\|_{L^2H^N}\|\partial_XMB^3_{\en}\|_{L^\infty H^N}\\
		\lesssim&\ \nu^{-1/2}\epsilon\nu^{-1/2}\epsilon^2=\nu^{-1}\epsilon^3.
	\end{align*}
	While $s=2$, it holds that
	\begin{align*}
		NLS^{LH}(1,\en,\en)=&\int_{t_0}^{\et}\left\langle \partial_{XX}M(B^{1,Lo}_{\en}\partial_XU^{3,Hi}_{\en}),\partial_{XX}MB^3_{\en}\right\rangle_{H^{N}}dt\\
		\lesssim&\|MB^1_{\en}\|_{L^2H^N}\|\partial_{XXX}MU^3_{\en}\|_{L^2H^N}\|\partial_{XX}MB^3_{\en}\|_{L^\infty H^N}\\
		\lesssim&\ \nu^{-1/2}\epsilon\nu^{-5/6}\epsilon\nu^{-1/3}\epsilon=\nu^{-1}\epsilon(\nu^{-1/3}\epsilon)^2.
	\end{align*}
	The estimates of the other contributions of $NLS$ and $NLT$ can be treated similarly to \eqref{bsz3}. 
	
	Turning to $G^3_{\en}$, for $N'\in\{N-2,N\}$, an energy estimate gives
	\begin{align*}
		&\frac{1}{2}\|MG^{3}_{\en}(\et)\|^2_{H^{N'}}+\nu\|\nabla_LMG^{3}_{\en}\|^2_{L^2H^{N'}}+\sum_{i=1}^{2}\left\|\sqrt{-\frac{\partial_t M_i}{M_i}}MG^{3}_{\en}\right\|^2_{L^2H^{N'}}\\
		&=\frac{1}{2}\|MG^{3}_{\en}(t_0)\|^2_{H^{N'}}+\delta_0\nu^{1/3}\|MG^{3}_{\en}\|_{L^2H^{N'}}^2-\int_{t_0}^{\et}\left\langle2\partial^L_{XY}MB^{3}_{\en},MG^{3}_{\en}\right\rangle_{H^{N'}} dt\\
		&\quad-\int_{t_0}^{\et}\left\langle M(U\cdot\nabla_LG^{3})_{\en},MG^{3}_{\en}\right\rangle_{H^{N'}} dt-\int_{t_0}^{\et}\left\langle M(Q\cdot\nabla_LB^{3})_{\en},MG^{3}_{\en}\right\rangle_{H^{N'}} dt\\
		&\quad-2\int_{t_0}^{\et}\left\langle M(\partial^L_iU^{j}\partial^L_{ij}B^{3})_{\en},MG^{3}_{\en}\right\rangle_{H^{N'}} dt+\int_{t_0}^{\et}\left\langle M(B\cdot\nabla_LQ^{3})_{\en},MG^{3}_{\en}\right\rangle_{H^{N'}} dt\\
		&\quad+\int_{t_0}^{\et}\left\langle M(B\cdot\nabla_LQ^{3})_{\en},MG^{3}_{\en}\right\rangle_{H^{N'}} dt+2\int_{t_0}^{\et}\left\langle M(\partial^L_iB^{j}\partial^L_{ij}U^{3})_{\en},MG^{3}_{\en}\right\rangle_{H^{N'}} dt.
	\end{align*}
	The amplification is caused by the nonlinear term $B^{1,Lo}_{\en}\partial_XQ^{3,Hi}_{\en}$, which follows
	\begin{align*}
		\int_{t_0}^{\et}\left\langle M(B^{1,Lo}_{\en}\partial_XQ^{3,Hi}_{\en}),MG^3_{\en}\right\rangle_{H^{N}}dt\lesssim&\|MB^1_{\en}\|_{L^2H^N}\|\partial_XMQ^3_{\en}\|_{L^2H^N}\|MG^3_{\en}\|_{L^\infty H^N}\\
		\lesssim&\ \nu^{-1/2}\epsilon\nu^{-3/2}\epsilon\nu^{-1}\epsilon\lesssim\nu^{-1}\epsilon(\nu^{-1}\epsilon)^2.
	\end{align*}
	The other estimates are similar to those for $Q^3_{\en}$. Therefore, we omit the treatment of $G^3_{\en}$ to conclude the improvement of \eqref{bsb3}--\eqref{bsh3}.
	\section{Energy estimates on zero mode}
	In this section we improve \eqref{bs0}. We provide the details only for $F^1_0,(1,\partial_Z)^2W^1_0$, and the estimates of $F^3_0$ can be treated similarly. For $F^2_0$, the incompressibility implies that $F^2_0$ always has a nonzero $Z$-frequency, which ensures that there is no interaction between two homogeneous modes in the energy estimate of $F^2_0$. Hence, the treatment of $F^2_0$ is similar to $F^2_{\es}$ \eqref{bsf2}.
	\subsection{Estimate of $F_0$}\label{secf0}
	In this subsection, we improve \eqref{bsf0}. An energy estimate gives
	\begin{align*}
		&\frac{1}{2}\|MF^{+,1}_{0}(\et)\|^2_{H^N}+\nu\|\nabla_LMF^{+,1}_{0}\|^2_{L^2H^N}+\left\|\sqrt{-\Upsilon}MF^{+,1}_{0}\right\|^2_{L^2H^N}\\
		&=\frac{1}{2}\|MF^{+,1}_{0}(t_0)\|^2_{H^N}-\int_{t_0}^{\et}\left\langle T^{-t}_{2\alpha}MF^{-,2}_{0},MF^{+,1}_0\right\rangle_{H^N} dt-\int_{t_0}^{\et}\left\langle M((T^{-t}_{2\alpha}W^{-})\cdot\nabla_LF^{+,1})_{0},MF^{+,1}_{0}\right\rangle_{H^N} dt\\
		&\quad-\int_{t_0}^{\et}\left\langle M((T^{-t}_{2\alpha}F^{-})\cdot\nabla_LW^{+,1})_{0},MF^{+,1}_{0}\right\rangle_{H^N} dt-2\int_{t_0}^{\et}\left\langle M(\partial^L_i(T^{-t}_{2\alpha}W^{-,j})\partial^L_{ij}W^{+,1})_{0},MF^{+,1}_{0}\right\rangle_{H^N} dt\\
		&=:\frac{1}{2}\|MF^{+,1}_{0}(t_0)\|_{H^N}^2+LU+NLT+NLS_1+NLS_2.	
	\end{align*}
	The estimate of $LU$ is similar to that in \cite{L20}. For the sake of completeness, we give a sketch of proof. By integration by parts in time, we obtain
	\begin{align*}
		LU=&-\frac{1}{2\alpha}\int_{t_0}^{\et}\left\langle \partial_t(\partial_Z^{-1}T^{-t}_{2\alpha})\mathbb{P}_{l\neq 0}MF^{-,2}_0,MF^{+,1}_0\right\rangle_{H^N}\\
		\leq&\ \frac{1}{2\alpha}\left\|\left\langle \partial_Z^{-1}T^{-t}_{2\alpha}\mathbb{P}_{l\neq 0}MF^{-,2}_0,MF^{+,1}_0\right\rangle_{H^N}\right\|_{L^\infty_t}+\frac{1}{2\alpha}\left|\int_{t_0}^{\et}\left\langle\partial_Z^{-1}T^{-t}_{2\alpha}\mathbb{P}_{l\neq 0}MF^{-,2}_0,\partial_t(MF^{+,1}_0)\right\rangle_{H^N}dt\right|\\
		&+\frac{1}{2\alpha}\left|\int_{t_0}^{\et}\left\langle\partial_Z^{-1}T^{-t}_{2\alpha}\mathbb{P}_{l\neq 0}\partial_t(MF^{-,2}_0),MF^{+,1}_0\right\rangle_{H^N}dt\right|\\
		=&:LU_1+LU_2+LU_3.
	\end{align*}
	For $LU_1$, noting $|\alpha|>8p$, it holds that
	\begin{equation*}
		LU_1\leq\frac{1}{2\alpha}\|MF^1_0\|_{L^\infty H^N}\|MF^2_0\|_{L^\infty H^N}\leq4(\nu^{-2/3}\epsilon)^2.
	\end{equation*}
	Turning next to $LU_2$, expanding the time derivative, we obtain
	\begin{align*}
		LU_2\leq&\ \frac{1}{2\alpha}\left|\int_{t_0}^{\et}\left\langle\partial_Z^{-1}T^{-t}_{2\alpha}MF^{-,2}_0,\frac{\partial_tM}{M}MF^{+,1}_0\right\rangle_{H^N}dt\right|+\frac{1}{2\alpha}\left|\int_{t_0}^{\et}\left\langle\partial_Z^{-1}T^{-t}_{2\alpha}MF^{-,2}_0,T^{-t}_{2\alpha}MF^{-,2}_0\right\rangle_{H^N}dt\right|\\
		&+\frac{1}{2\alpha}\left|\int_{t_0}^{\et}\left\langle\partial_Z^{-1}T^{-t}_{2\alpha}MF^{-,2}_0,\nu\Delta MF^{+,1}_0\right\rangle_{H^N}dt\right|+\frac{1}{2\alpha}\left|\int_{t_0}^{\et}\left\langle\partial_Z^{-1}T^{-t}_{2\alpha}MF^{-,2}_0, M(\partial_tF^{+,1}_0)_{\mathcal{NL}}\right\rangle_{H^N}dt\right|,
	\end{align*}
	where \begin{equation*}
		(\partial_tF^{+,1}_0)_{\mathcal{NL}}=((T^{-t}_{2\alpha}W^{-})\cdot\nabla_LF^{+,1}+(T^{-t}_{2\alpha}F^{-})\cdot\nabla_LW^{+,1}+2\partial^L_i(T^{-t}_{2\alpha}W^{-,j})\partial^L_{ij}W^{+,1})_0.
	\end{equation*}  
	Crucially, the second linear term vanishes. In fact,
	it holds that
	\begin{equation*}
		\left\langle\partial_Z^{-1}T^{-t}_{2\alpha}MF^{-,2}_0,T^{-t}_{2\alpha}MF^{-,2}_0\right\rangle_{H^N}=\int\partial_Z(\partial_Z^{-1}\mathbb{P}_{l\neq 0}T^{-t}_{2\alpha}\langle\nabla\rangle MF^{-,2}_0)^2dXdYdZ=0.
	\end{equation*}
	The other contributions and $LU_3$ can be treated similarly to the $OLS$ term in \eqref{bssymv2}. We omit further details.

	For $NLT$, we start with the interaction between two \nm
	\begin{equation*}
		NLT(i,\en,\en)=\int_{t_0}^{\et}\left\langle M(W^{i}_{\en}\partial_i^LF^{1}_{\en})_{0},MF^{+,1}_{0}\right\rangle_{H^N} dt,
	\end{equation*}
	with $i\in\{1,2,3\}$. When $i\in\{1,3\}$, using divergence-free condition, we have
	\begin{align*}
		NLT(i,\en,\en)\lesssim&\|M(W^1_{\en},W^3_{\en})\|_{L^2H^N}(\|\partial^L_YMF^2_{\en}\|_{L^2H^N}+\|MF^3_{\en}\|_{L^2H^N})\|MF^1_0\|_{L^\infty H^N}\\
		\lesssim&\ \nu^{-1/2}\epsilon\nu^{-7/6}\epsilon\nu^{-2/3}\epsilon=\nu^{-1}\epsilon(\nu^{-2/3}\epsilon)^2.
	\end{align*}
	Turning to $i=2$, by integration by parts, it holds that
	\begin{align*}
		NLT^{LH}(2,\en,\en)=&\int_{t_0}^{\et}\left\langle \partial^L_YM((T^{-t}_{2\alpha}W^{Lo,2}_{\en})F^{Hi,1}_{\en})-M((T^{-t}_{2\alpha}\partial^L_YW^{Lo,2}_{\en})F^{Hi,1}_{\en}),MF^{+,1}_{0}\right\rangle_{H^N} dt\\
		\lesssim&(\|MW^2_{\en}\|_{L^\infty H^{3/2+}}\|\partial_YMF^1_0\|_{L^2H^N}+\|\partial_Y^LMW^2_{\en}\|_{L^\infty H^{N}}\|MF^1_0\|_{L^\infty H^N})\|MF^1_{\en}\|_{L^2H^N}\\
		\lesssim&\ \epsilon\nu^{-7/6}\epsilon\nu^{-7/6}\epsilon+\nu^{-1/2}\epsilon\nu^{-2/3}\epsilon\nu^{-7/6}\epsilon=\nu^{-1}\epsilon(\nu^{-2/3}\epsilon)^2.
	\end{align*}
	For $NLT^{HL}(2,\en,\en)$, expanding $W^+=T^{-t}_{\alpha}(U+B)$ and $F^1_{\en}=-\partial^L_Y\partial_X^{-1}F^2_{\en}+\sigma F^3_{\en}$, we get
	\begin{align*}
		NLT^{HL}(2,\en,\en)=&\int_{t_0}^{\et}\big\langle M(U^{2,Hi}_{\en}\partial^L_Y\partial^L_Y\partial_X^{-1}F^{2,Lo}_{\en}),MF^1_{0}\big\rangle_{H^N}dt\\
		&+\int_{t_0}^{\et}\big\langle M(B^{2,Hi}_{\en}\partial^L_Y\partial^L_Y\partial_X^{-1}F^{2,Lo}_{\en}),MF^1_{0}\big\rangle_{H^N}dt\\
		&+\int_{t_0}^t\big\langle M(W^{2,Hi}_{\en}\partial^L_YF^{3,Lo}_{\en}),MF^1_{0}\big\rangle_{H^N}dt=:I_1+I_2+I_3.
	\end{align*}
	Noting $|\xi-kt|^2\lesssim\nu^{-2/3}e^{\delta_0\nu^{1/3}t}|k,\xi|^2$ and $N>9/2$, by H$\ddot{\mathrm{o}}$lder's inequality and Fubini's theorem, we obtain
	\begin{align*}
		&\big\langle M(U^{2,Hi}_{\en}\partial^L_Y\partial^L_Y\partial_X^{-1}F^{2,Lo}_{\en}),MF^1_{0}\big\rangle_{H^N}\\
		&\lesssim\nu^{-2/3}\sum_{l,k\neq0}
		\iint_{\mathbb{R}^2}\left|\langle k,\eta-\xi,l\rangle^NM\hat{U}^2_{\en}(-k,\eta-\xi,-l)|k,\xi|^2M\hat{F}^2_{\en}(k,\xi,l)\langle\eta\rangle^NM\bar{\hat{F}}^1(0,\eta,0)\right|d\eta d\xi\\
		&\lesssim\nu^{-2/3}\left(\sum_{l,k\neq0}\iint_{\mathbb{R}^2}\left|\langle k,\eta-\xi,l\rangle^N|k,\eta-\xi+kt|M\hat{U}^2_{\en}(-k,\eta-\xi,-l)\langle|k,\xi|\rangle^3 M\hat{F}^2_{\en}(k,\xi,l)\right|^2d\eta d\xi\right)^{1/2}\\
		&\quad\times\left(\sum_{k\neq0}\iint_{\mathbb{R}^2}\left|\frac{\langle\eta\rangle^N}{|k,\eta-\xi+kt|\langle|k,\xi|\rangle}M\hat{F}^1(0,\eta,0)\right|^2d\xi d\eta\right)^{1/2}\\
		&\lesssim\ \nu^{-2/3}\||\nabla_L|M U^2_{\en}\|^2_{H^N}\|MF^2_{\en}\|_{H^N}\left(\int_{\mathbb{R}}\sum_{k\neq0}\frac{1+2|k|}{|k|[(1+2|k|^2)+(\eta+kt)^2]}\langle\eta\rangle^{2N}|M\hat{F}^1(0,\eta,0)|^2d\eta\right)^{1/2}\\
		&\lesssim\nu^{-2/3}\||\nabla_L|M U^2_{\en}\|^2_{H^N}\|MF^2_{\en}\|_{H^N}\|\sqrt{-\Upsilon}MF^1_0\|_{H^N},
	\end{align*}
	where we used the fact, for $a,b>0,c\in\mathbb{R}$, it holds that
	\begin{equation*}
		\int_{\mathbb{R}}\frac{\mathrm{d}\xi}{(a^2+\xi^2)(b^2+(c-\xi)^2)}=\frac{\pi}{ab}\frac{a+b}{(a+b)^2+c^2}.
	\end{equation*}
	Integrating in time, we have
	\begin{equation*}
		I_1\lesssim\nu^{-2/3}\||\nabla_L|M U^2_{\en}\|^2_{L^2H^N}\|MF^2_{\en}\|_{L^\infty H^N}\|\sqrt{-\Upsilon}MF^1_0\|_{L^2H^N}\lesssim\nu^{-1}\epsilon(\nu^{-2/3}\epsilon)^2.
	\end{equation*}
	Turning next to $I_2$ and $I_3$, we have
	\begin{align*}
		I_2\leq&\int_{t_0}^{\et}\left|\big\langle M(B^{2,Hi}_{\en}\partial^L_Y\partial^L_Y\partial_X^{-1}F^{2,Lo}_{\en}),MF^1_{0}\big\rangle_{H^N}\right|dt\\
		\lesssim&\ \nu^{-2/3}\|MB^2_{\en}\|_{L^2H^N}\|MF^2_{\en}\|_{L^2H^N}\|MF^1_{0}\|_{L^\infty H^N}\\
		\lesssim&\ \nu^{-2/3}\nu^{-1/6}\epsilon\nu^{-5/6}\epsilon\nu^{-2/3}\epsilon=\nu^{-1}\epsilon(\nu^{-2/3}\epsilon)^2,
	\end{align*}
	as well as 
	\begin{align*}
		I_3\leq&\int_{t_0}^t\left|\big\langle M(W^{2,Hi}_{\en}\partial^L_YF^{3,Lo}_{\en}),MF^1_{0}\big\rangle_{H^N}\right|dt\\
		\lesssim&\|MW^2_{\en}\|_{L^2H^N}\|\partial_Y^LF^3_{\en}\|_{L^2H^{3/2+}}\|MF^1_0\|_{L^\infty H^N}\\
		\lesssim&\ \nu^{-1/3}\epsilon\nu^{-7/6}\epsilon\nu^{-2/3}\epsilon=\nu^{-5/6}\epsilon(\nu^{-2/3}\epsilon)^2.
	\end{align*}
	Putting together the above estimates, we finish the improvement of $NLT(\en,\en)$. The interactions between \sm and \nm, or two non-homogeneous modes can be treated similarly and hence are omitted for brevity. For $NLT(0,0)$, the dissipation in \eqref{bs0} implies
	\begin{align*}
		NLT(0,0)=&\int_{t_0}^{\et}\left\langle M(W_{0}\cdot\nabla F^{1}_{0}),MF^{+,1}_{0}\right\rangle_{H^N} dt\\
		\lesssim&\|\nabla MW^2_0\|_{L^2H^N}\|\nabla MF^1_0\|_{L^2H^N}\|MF^1_0\|_{L^\infty H^N}\\
		&+\|\nabla MW^3_0\|_{L^2H^N}\|\partial_ZMF^1_0\|_{L^2H^N}\|MF^1_0\|_{L^\infty H^N}+\| MW^3_0\|_{L^\infty H^N}\|\partial_ZMF^1_0\|_{L^2H^N}\|\nabla MF^1_0\|_{L^2 H^N}\\
		\lesssim&\ \nu^{-1}\epsilon(\nu^{-2/3}\epsilon)^2.
	\end{align*}
	Turning to $NLS_1(\en,\en)$, by divergence-free condition, it holds that
	\begin{align*}
		NLS_1(\en,\en)=&\int_{t_0}^{\et}\left\langle
		 M(F_{\en}\cdot\nabla_LW^{1}_{\en})_{0},MF^{+,1}_{0}\right\rangle_{H^N} dt\\
		\lesssim&\|(MF^1_{\en},MF^3_{\en})\|_{L^2H^N}\|\partial_XMW^1_{\en}\|_{L^2H^N}\|MF^1_0\|_{L^\infty H^N}\\
		&+\|MF^2_{\en}\|_{L^2H^N}(\|MF^2_{\en}\|_{L^2H^N}+\|\partial_Y^L\partial_XMW^3_{\en}\|_{L^2H^N})\|MF^1_0\|_{L^\infty H^N}\\
		\lesssim&\ \nu^{-7/6}\epsilon\nu^{-1/2}\epsilon\nu^{-2/3}\epsilon+\nu^{-5/6}\epsilon\nu^{-5/6}\epsilon\nu^{-2/3}\epsilon=\nu^{-1}\epsilon(\nu^{-2/3}\epsilon)^2.
	\end{align*}
	The estimates of the other contributions of $NLS_1$ and $NLS_2$ are similar to the above, and we omit further details. 
	
	Finally, the estimate of $F^3_0$ is natural from the above. For example, turning to the interaction between $B_{\en}$ and $\nabla_LF^{3}_{\en}$, it holds that
	\begin{align*}
		\int_{t_0}^{\et}\left\langle M(B_{\en}\cdot\nabla_LF^{3}_{\en})_0,MF^3_{0}\right\rangle_{H^N}dt\lesssim&\|\partial_X(MB^1_{\en},MB^3_{\en})\|_{L^2H^N}\|MF^3_{\en}\|_{L^2H^N}\|MF^3_0\|_{L^\infty H^N}\\
		&+\|MB^2_{\en}\|_{L^2H^N}\|\partial_Y^LMF^3_{\en}\|_{L^2H^N}\|MF^3_0\|_{L^\infty H^N}\\
		\lesssim&\ \nu^{-1/2}\epsilon\nu^{-7/6}\epsilon\nu^{-2/3}\epsilon+\nu^{-1/6}\epsilon\nu^{-3/2}\epsilon\nu^{-2/3}\epsilon\lesssim\nu^{-1}\epsilon(\nu^{-2/3}\epsilon)^2.
	\end{align*} 
  	Noting that there is no interaction between two homogeneous modes in nonlinear pressure term, it can be treated similarly to \eqref{bsf3}. This completes the improvement of \eqref{bsf0}.
	\subsection{Estimate of $(1,\partial_Z)^2W_0$}
	In this subsection, we improve $\eqref{bsz0}$. We start with $\partial_{ZZ}W^1_0$, an energy estimate gives 
	\begin{align*}
		&\frac{1}{2}\|\partial_{ZZ}MW^{+,1}_0(\et)\|_{H^N}+\nu\|\nabla_L\partial_{ZZ}MW^{+,1}_{0}\|^2_{L^2H^N}+\left\|\sqrt{-\Upsilon}\partial_{ZZ}MW^{+,1}_{0}\right\|^2_{L^2H^N}\\
		&=\frac{1}{2}\|\partial_{ZZ}MW^{+,1}_{0}(t_0)\|^2_{H^N}-\int_{t_0}^{\et}\left\langle T^{-t}_{2\alpha}\partial_{ZZ}MW^{-,2}_{0}+\partial_{ZZ}M((T^{-t}_{2\alpha}W^{-})\cdot\nabla_LW^{+,1})_{0},\partial_{ZZ}MW^{+,1}_0\right\rangle_{H^N} dt\\
		&=:\frac{1}{2}\|\partial_{ZZ}MW^{+,1}_{0}(t_0)\|_{H^N}^2+LU+NLT.	
	\end{align*}
	The $LU$ term can be dealt with using integration by parts in time as in section \ref{secf0}. We skip it and turn to $NLT$.
	Since $\partial_{ZZ}W_0$ has a nonzero $Z$-frequency, there is no interaction between two homogeneous modes. For $NLT(\en,\es)$, we define the components
	\begin{equation*}
		NLT(i,\en,\es)=\int_{t_0}^{\et}\left\langle \partial_{ZZ}M(W^i_{\en}\cdot\partial_i^LW^{1}_{\es})_{0},\partial_{ZZ}MW^{+,1}_{0}\right\rangle_{H^N} dt,
	\end{equation*}
	with $i\in\{1,2,3\}$.
	By the bootstrap assumptions, it holds that
	\begin{align*}
		NLT(1,\en,\es)=&NLT^{HL}(1,\en,\es)+NLT^{LH}(1,\en,\es)\\
		\lesssim&\|\partial_{XX}MW^1_{\en}\|_{L^2H^N}\|\partial_XMW^1_{\es}\|_{L^2H^N}\|\partial_{ZZ}MW^1_0\|_{L^\infty H^N}\\
		&+\|MW^1_{\en}\|_{L^2H^{3/2+}}\|\partial_{ZZX}MW^1_{\es}\|_{L^2H^N}\|\partial_{ZZ}MW^1_0\|_{L^\infty H^N}\\
		\lesssim&\ \nu^{-1/2}\epsilon\nu^{-1/6}\epsilon^2+\nu^{-1/2}\epsilon\nu^{-1/2}\epsilon\epsilon\lesssim\nu^{-1}\epsilon^3.
	\end{align*}
	When $j\in\{2,3\}$, using integration by parts, we obtain
	\begin{align*}
		NLT(j,\en,\es)=&NLT^{HL}(j,\en,\es)+NLT^{LH}(j,\en,\es)\\
		\lesssim&\|\partial_{XX}MW^j_{\en}\|_{L^2H^N}\|\nabla_LMW^1_{\es}\|_{L^2H^N}\|\partial_{ZZ}MW^1_0\|_{L^\infty H^N}\\
		&+\|MW^j_{\en}\|_{L^\infty H^{3/2+}}\|\nabla_L\partial_ZMW^1_{\es}\|_{L^2H^N}\|\partial_Z\partial_{ZZ}MW^1_0\|_{L^2 H^N}\\
		\lesssim&\ \nu^{-1/2}\epsilon\nu^{-1/2}\epsilon\epsilon+\epsilon\nu^{-1/2}\epsilon\nu^{-1/2}\epsilon=\nu^{-1}\epsilon^3.
	\end{align*}
	Turning to $NLT(\es,\en)$, we have
	\begin{align*}
		NLT(\es,\en)=&NLT^{HL}(\es,\en)+NLT^{LH}(\es,\en)\\
		\lesssim&\|\partial_{ZZ}(MW^1_{\es},MW^3_{\es})\|_{L^2 H^N}\|\partial_XMW^1_{\en}\|_{L^2H^{3/2+}}\|\partial_{ZZ}MW^1_0\|_{L^\infty H^N}\\
		&+\|\partial_{ZZ}MW^2_{\es}\|_{L^2 H^N}\|\partial_Y^LMW^1_{\en}\|_{L^2H^{3/2+}}\|\partial_{ZZ}MW^1_0\|_{L^\infty H^N}\\
		&+\|\partial_X(MW^1_{\es},MW^3_{\es})\|_{L^\infty H^{3/2+}}\|\partial_{XX}MW^1_{\en}\|_{L^2H^N}\|\partial_{ZZ}MW^1_0\|_{L^2H^N}\\
		&+\|\partial_XMW^2_{\es}\|_{L^2 H^{3/2+}}\|\partial_{XY}^LMW^1_{\en}\|_{L^2H^N}\|\partial_{ZZ}MW^1_0\|_{L^\infty H^N}\\
		\lesssim&\ \nu^{-1/2}\epsilon\nu^{-1/2}\epsilon^2+\nu^{-1/6}\epsilon\nu^{-5/6}\epsilon^2+\nu^{-1/6}\epsilon\nu^{-2/3}\epsilon\epsilon+\epsilon\nu^{-1}\epsilon^2\lesssim\nu^{-5/6}\epsilon^3.
	\end{align*}
	For $NLT(0,0)$, we use the integration by parts to obtain
	\begin{align*}
		NLT(0,0)\lesssim&\|\partial_Z(MW^2_{0},MW^3_0)\|_{L^\infty H^N}\|\nabla_L\partial_ZMW^1_{0}\|_{L^2H^N}\|\partial_Z\partial_{ZZ}MW^1_0\|_{L^2H^N}\lesssim\nu^{-1}\epsilon^3.
	\end{align*}
	Finally, for $W_0^{+,r}$ with $r\in\{1,2,3\}$, the energy estimate reads
	\begin{align*}
		&\frac{1}{2}\|MW^{+,r}_0(\et)\|_{H^N}+\nu\|\nabla_LMW^{+,r}_{0}\|^2_{L^2H^N}+\left\|\sqrt{-\Upsilon}MW^{+,r}_{0}\right\|^2_{L^2H^N}\\
		&=\frac{1}{2}\|MW^{+,r}_{0}(t_0)\|^2_{H^N}-\int_{t_0}^{\et}\left\langle M((T^{-t}_{2\alpha}W^{-})\cdot\nabla_LW^{+,r})_{0},MW^{+,r}_{0}\right\rangle_{H^N} dt\\
		&\quad-\mathbf{1}_{r=1}\int_{t_0}^{\et}\left\langle T^{-t}_{2\alpha}MW^{-,2}_{0},MW^{+,r}_0\right\rangle_{H^N} dt+\mathbf{1}_{r\neq1}\int_{t_0}^{\et}\left\langle \partial_r\Delta^{-1}M((T^{-t}_{2\alpha}\partial_jW^{-,i})\partial_iW^{+,j})_{0},MW^{+,r}_{0}\right\rangle_{H^N} dt	.
	\end{align*}
	The estimate is similar to that of $\eqref{bsf0}$, and one can proceed analogously with $W^1_0$ playing the role of $F^1_0$, which completes the improvement of \eqref{bsz0}.
		
	\vspace{4 mm}
	\noindent \textbf{Acknowledgements:} The authors would like to thank Prof. Zhifei Zhang for pointing out this question to us and the meaningful discussions. The work of FW is supported by the National Natural Science Foundation of China (No. 12471223 and 12331008). Lingda Xu is supported by the Research Centre for Nonlinear Analysis at The Hong Kong Polytechnic University.
	\bibliography{ref}
\end{document}